\newtheorem{theorem}{Theorem}[section]
\newtheorem{lemma}[theorem]{Lemma}
\newtheorem{corollary}[theorem]{Corollary}
\theoremstyle{definition}
\theoremstyle{definitions}
\newtheorem{definition}[theorem]{Definition}
\theoremstyle{notations}
\theoremstyle{remarks}
\journal{ }
\begin{document}

\begin{frontmatter}
\title{The Capacity of Some Classes of Polyhedra}
\author[]{Mojtaba~Mohareri}
\ead{m.mohareri@stu.um.ac.ir}
\author[]{Behrooz~Mashayekhy\corref{cor1}}
\ead{bmashf@um.ac.ir}
\author[]{Hanieh~Mirebrahimi}
\ead{h$_{-}$mirebrahimi@um.ac.ir}
\address{Department of Pure Mathematics, Center of Excellence in Analysis on Algebraic Structures, Ferdowsi University of
Mashhad,\\
P.O.Box 1159-91775, Mashhad, Iran.}
\cortext[cor1]{Corresponding author}
\begin{abstract}
 K. Borsuk in 1979, in the Topological Conference in Moscow, introduced the concept of the capacity of a compactum.  In this paper, we compute the capacity of the product of two spheres of the same or different dimensions and the capacity of lense spaces. Also, we present an upper bound for the capacity of a  $\mathbb{Z}_n$-complex, i.e.,  a connected finite 2-dimensional CW-complex with finite cyclic fundamental group $\mathbb{Z}_n$.
\end{abstract}

\begin{keyword} Homotopy domination\sep Homotopy type \sep Moore space\sep Polyhedron \sep CW-complex \sep Compactum.

\MSC[2010]{55P15, 55P55, 55P20,54E30, 55Q20.}

\end{keyword}

\end{frontmatter}
\section{Introduction and Motivation}
In this paper, every CW-complex is assumed to be finite and connected. Also, every map between two CW-complexes is assumed to be cellular. We assume that the reader is familiar with the basic notions and facts of homotopy theory.

 K. Borsuk in \cite{So}, introduced the concept of the capacity of a compactum as follows:
 the capacity $C(X)$ of a compactum $X$ is the cardinality of the set of all shapes of compacta $A$  which are shape dominated by $X$ (for more details, see \cite{Mar}).

For polyhedra,  the notions shape and shape domination in the above definiton can be replaced by the notions homotopy type and homotopy domination, respectively. Indeed, by some known
results in shape theory one can conclude that for any polyhedron $P$, there is a one to one functorial correspondence between the shapes of compacta shape dominated by $P$ and the homotopy types of CW-complexes (not necessarily finite) homotopy dominated by $P$ (see \cite{1}).

S. Mather in \cite{Mather} proved that every polyhedron dominates only countably many  different homotopy types (hence shapes). Note that the capacity of a compactum is a homotopy invariant, i.e., for
compacta $X$ and $Y$ with the same homotopy type, $C(X)=C(Y)$. This property can be useful for  distinguishing two compacta up to homotopy equivalence. Hence it seems interesting to find compacta with finite capacity and compute the capacity of some of their well-known spaces.  Borsuk in \cite{So} asked a question: `` Is it true that the capacity of every finite polyhedron is finite? ''.  D. Kolodziejczyk in \cite{3} gave a negative answer to this question. Also, she investigated some conditions for polyhedra to have finite capacity (\cite{1, 2}). For instance, a polyhedron $Q$ with finite fundamental group $\pi_1 (Q)$ and a polyhedron $P$ with abelian fundamental group $\pi_1 (P)$ and finitely generated homology groups $H_i (\tilde{P})$, for $i\geq 2$ where $\tilde{P}$ is the universal cover of $P$, have finite capacities.

Borsuk in \cite{So} mentioned that the capacities of $\bigvee_k \mathbb{S}^1$ and $\mathbb{S}^n$ are equal to $k+1$ and 2, respectively.  The authors in \cite{Moh} computed the capacities of Moore spaces $M(A,n)$ and Eilenberg-MacLane spaces $K(G,n)$. In fact, we showed that the capacities of a Moore space $M(A,n)$ and an Eilenberg-MacLane space $K(G,n)$ are equal to the number of direct summands of $A$ and semidirect factors of  $G$ up to isomorphism, respectively. Also, we computed the capacity of the  wedge sum of finitely many Moore spaces of different degrees and the capacity of the product of finitely many Eilenberg-MacLane spaces of different homotopy types. In \cite{Moh1}, we showed that the capacity of $\bigvee_{n\in I} (\vee_{i_n} \mathbb{S}^n)$ is equal to $\prod_{n\in I}(i_n +1)$ where $\vee_{i_n} \mathbb{S}^n$ denotes the wedge sum of $i_n$ copies of $\mathbb{S}^n$, $I$ is a finite subset of $\mathbb{N}$ and $i_n \in \mathbb{N}$. In fact, we proved  that every space homotopy dominated by $\bigvee_{n\in I} (\vee_{i_n} \mathbb{S}^n)$ has the homotopy type of $\bigvee_{n\in I} (\vee_{j_n} \mathbb{S}^n)$, where  $0\leq j_n \leq i_n$.

M. Abbasi et al. in \cite{Mah} computed the capacity of 2-dimensional manifolds. They showed that the capacities of a compact orientable surface of genus $g\geq 0$ and a compact non-orientable surface of genus $g>0$ are equal to $g+2$ and $[\frac{g}{2}]+2$, respectively. In \cite{Moh1}, we proved the capacity of a 2-dimensional CW-complex $P$ with free fundamental group $\pi_1 (P)$ is finite and is equal to $(rank\; \pi_1 (P)+1)\times (rank\; H_2 (P)+1)$.

 We outline the main results of the paper. In Section 3,  we compute the capacity of the product of two spheres of the same or different dimensions. Then, in Section 4, we compute the capacity of lens spaces which are a class of closed orientable 3-manifolds. Also, by a similar method to computation of capacity of a  lens space, we show that the capacity of a real projective $n$-space is equal to 2. Note that this result was proved by Y. Kodama et al. in \cite{Kod} in a different manner. Finally, in Section 5, we find an upper bound for the capacity of a $\mathbb{Z}_n$-complex, a 2-dimensional CW-complex with finite cyclic fundamental group $\mathbb{Z}_n$. In fact, we show that every space homotopy dominated by a $\mathbb{Z}_n$-complex $P$ where $n=p_{1}^{\alpha_1}p_{2}^{\alpha_2}\cdots p_{m}^{\alpha_m}$ (for mutually distinct primes $p_i$ and positive integers  $\alpha_i$)   has the homotopy type of a $\mathbb{Z}_{m}$-complex where $m=p_{i_1}^{\alpha_{i_1}}\cdots p_{i_j}^{\alpha_{i_j}}$ for $i_1 ,\cdots ,i_j \in \{ 1,\cdots ,m\}$.

\section{Preliminaries}
We recall here only some facts that we will use  throughout the paper.
\begin{definition}\cite{Wall}.
Let $P$ be a CW-complex. The condition $\mathcal{D}_n$ on $P$ are defined as follows:

$\mathcal{D}_n$: $H_i(\tilde{P})=0$ for $i>n$, and $H^{n+1}(P;\mathcal{B})=0$ for all coefficient bundles $\mathcal{B}$ (for more detalis, see \cite{STE}). Note that $\tilde{P}$ denotes the universal covering space of $P$.
\end{definition}
 \begin{definition}\citep{Hat}.
A Moore space of degree $n$ $(n\geq 2)$ is a simply connected $CW$-complex $X$ with a single non-vanishing homology group of degree $n$, that is $\tilde{H}_{i}(X,\mathbb{Z})=0$ for $i\neq n$. A Moore space of degree $n$ is denoted by $M(A,n)$ where $A\cong \tilde{H}_{n} (X,\mathbb{Z})$.
\end{definition}
As an example, the $n$-sphere $\mathbb{S}^n$ for  $n\geq 2$ is a Moore space of degree $n$, $\mathbb{S}^n = M(\mathbb{Z}, n)$.
\begin{theorem}\label{-2}\cite{Hat}.
The homotopy type of a  Moore space $M(A,n)$ is uniquely determined by $A$ and $n$ for $n \geq 2$.
\end{theorem}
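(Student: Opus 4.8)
The plan is to fix a standard model built directly from the algebraic data $A$ and then to show that every Moore space $M(A,n)$ is homotopy equivalent to this model; since the model depends only on $A$ and $n$, uniqueness follows at once. First I would choose a free resolution $0 \to F_1 \to F_0 \to A \to 0$ of the abelian group $A$ and construct a CW-complex $M$ having cells only in dimensions $n$ and $n+1$: take a wedge $\bigvee_\alpha \mathbb{S}^n_\alpha$ of $n$-spheres indexed by a basis of $F_0$, so that $H_n(\bigvee_\alpha \mathbb{S}^n_\alpha) \cong F_0$, and then attach $(n+1)$-cells along maps realizing a basis of the subgroup $F_1 \subseteq F_0$. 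Since $n \geq 2$, the Hurewicz theorem gives $\pi_n(\bigvee_\alpha \mathbb{S}^n_\alpha) \cong H_n(\bigvee_\alpha \mathbb{S}^n_\alpha) \cong F_0$, so such attaching maps exist; the resulting complex $M$ satisfies $\tilde{H}_n(M) \cong F_0/F_1 \cong A$ with all other reduced homology groups vanishing, i.e. $M$ is a Moore space $M(A,n)$.

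Next, let $X$ be an arbitrary Moore space $M(A,n)$. Because $X$ is simply connected with $\tilde{H}_i(X) = 0$ for $i < n$, the Hurewicz theorem yields $\pi_n(X) \cong H_n(X) \cong A$. I would then build a map $f : M \to X$ as follows. Each sphere $\mathbb{S}^n_\alpha$ in the $n$-skeleton of $M$ corresponds to a basis element $e_\alpha$ of $F_0$; sending it into $X$ so as to represent the image of $e_\alpha$ under $F_0 \twoheadrightarrow A \cong \pi_n(X)$ defines a map on the wedge $\bigvee_\alpha \mathbb{S}^n_\alpha$. To extend over the $(n+1)$-cells, observe that the attaching map of each such cell represents an element lying in $F_1 \subseteq F_0 \cong \pi_n(\bigvee_\alpha \mathbb{S}^n_\alpha)$; composing with the chosen map into $X$ carries this element to its image in $A = \pi_n(X)$, which is $0$ by exactness of the resolution. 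Hence each composite $\mathbb{S}^n \to \bigvee_\alpha \mathbb{S}^n_\alpha \to X$ is null-homotopic, and the map extends across the corresponding $(n+1)$-cell, producing $f : M \to X$.

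It remains to check that $f$ is a homotopy equivalence. By construction $f_* : H_n(M) \cong A \to H_n(X) \cong A$ is an isomorphism, while all other homology groups of $M$ and $X$ vanish, so $f$ induces isomorphisms on homology in every degree. Since both $M$ and $X$ are simply connected CW-complexes, the homology form of the Whitehead theorem shows that $f$ is a homotopy equivalence, whence $X \simeq M$. I expect the main obstacle to be the cell-by-cell extension step: one must argue carefully, via Hurewicz, that the homotopy class of each attaching map is pinned down by its homology class, so that vanishing in $A$ genuinely forces null-homotopy of the composite, and that the extension can be carried out coherently over all the $(n+1)$-cells at once. Once this dictionary between the algebraic resolution and the homotopy-theoretic attaching data is in place, the homology Whitehead theorem upgrades the homology isomorphism to the desired homotopy equivalence, and the theorem follows.
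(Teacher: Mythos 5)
Your proof is correct and is precisely the standard argument underlying the paper's citation of this result to Hatcher (the paper gives no proof of its own, quoting the theorem from Hatcher, where it is established exactly this way): build a model from a free resolution $0\to F_1\to F_0\to A\to 0$, use Hurewicz to realize the quotient $F_0\twoheadrightarrow A\cong\pi_n(X)$ by a map of the wedge, extend over the $(n+1)$-cells by exactness, and conclude with the homology Whitehead theorem for simply connected CW-complexes. The obstacle you flag at the end is in fact vacuous: the attaching maps of $M$ are \emph{chosen} as explicit elements of $F_1\subseteq F_0\cong\pi_n\bigl(\bigvee_\alpha\mathbb{S}^n_\alpha\bigr), so their composites into $X$ are null-homotopic purely by exactness (no appeal to homology classes determining homotopy classes is needed), and the extensions over distinct $(n+1)$-cells are mutually independent, so no coherence issue arises.
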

\begin{theorem}\label{1}\cite[Theorem 4.32]{Hat}
(Hurewicz theorem) If a topological space $X$ is $(n-1)$-connected, $n\geq 2$, then $\tilde{H}_i (X)=0$ for $i<n$, $h_{n}^{X}:\pi_n (X)\cong H_n (X)$ and $h_{n+1}^{X}:\pi_{n+1}(X)\longrightarrow H_{n+1}(X)$ is epimorphism, where $h_{i}^{X}:\pi_i (X)\longrightarrow H_i (X)$ denotes the $i$-th Hurewicz map.
\end{theorem}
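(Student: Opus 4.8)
\emph{Proof proposal.} The plan is to reduce the statement to a convenient CW model and then analyze the Hurewicz homomorphism one skeleton at a time. Since a weak homotopy equivalence induces isomorphisms on all homotopy and all homology groups, and since the maps $h_i^X$ are natural, CW approximation lets me assume that $X$ is a CW-complex. Because $X$ is $(n-1)$-connected with $n\geq 2$, a standard obstruction argument further lets me replace $X$ by a homotopy equivalent complex having a single $0$-cell and no cells in dimensions $1,\ldots,n-1$. For such a model the cellular chain complex vanishes in degrees $1,\ldots,n-1$, so $\tilde{H}_i(X)=0$ for $i<n$ is immediate, which disposes of the first assertion.

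For the isomorphism $h_n^X:\pi_n(X)\cong H_n(X)$ I would argue as follows. The $n$-skeleton is a wedge $X^{(n)}=\bigvee_\alpha \mathbb{S}^n_\alpha$, and since $n\geq 2$ both $\pi_n(X^{(n)})$ and $H_n(X^{(n)})$ are free abelian on the set of $n$-cells, with $h_n$ carrying the class of each inclusion $\mathbb{S}^n_\alpha\hookrightarrow X^{(n)}$ to the corresponding generator; thus $h_n$ is an isomorphism at this stage. Passing to $X^{(n+1)}$ attaches $(n+1)$-cells along maps $\varphi_\beta:\mathbb{S}^n\to X^{(n)}$: in homology this quotients $H_n$ by the subgroup generated by the classes $h_n([\varphi_\beta])$, while in homotopy it quotients $\pi_n$ by the subgroup generated by the $[\varphi_\beta]$ (an ordinary subgroup, not merely a normal one, because $\pi_n$ is abelian for $n\geq 2$). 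Since $h_n$ matches the two generating families, it descends to an isomorphism on the quotients, and because cells of dimension $\geq n+2$ change neither $\pi_n$ nor $H_n$, taking the colimit yields $h_n^X:\pi_n(X)\cong H_n(X)$.

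It remains to prove that $h_{n+1}^X$ is surjective, which I expect to be the main obstacle, since the skeletal computation directly controls $H_{n+1}$ but not $\pi_{n+1}$. Here I would apply the relative Hurewicz theorem to the pair $(X,X^{(n)})$: this pair is $n$-connected (its relative cells begin in dimension $n+1$) and $X^{(n)}$ is simply connected, so $\pi_{n+1}(X,X^{(n)})\cong H_{n+1}(X,X^{(n)})$. Comparing the long exact homotopy and homology sequences of the pair through the natural Hurewicz maps, and using that $H_{n+1}(X^{(n)})=0$, so that $H_{n+1}(X)$ injects into $H_{n+1}(X,X^{(n)})$, together with the isomorphism $h_n$ already established on the wedge $X^{(n)}$, a diagram chase lifts any class of $H_{n+1}(X)$ to $\pi_{n+1}(X)$. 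Concretely, a class $c\in H_{n+1}(X)$ pulls back under relative Hurewicz to some $\tilde{c}\in\pi_{n+1}(X,X^{(n)})$ whose boundary dies in $\pi_n(X^{(n)})$, because $h_n$ is injective there and $c$ is a cycle, hence $\tilde{c}$ lifts to $\pi_{n+1}(X)$ and realizes $c$. The delicate points are the correct invocation of the relative Hurewicz theorem and the naturality of the Hurewicz map with respect to the boundary homomorphisms, which is exactly what makes the chase go through.
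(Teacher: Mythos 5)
Your proposal is correct, but note that the paper itself contains no proof of this statement: it is quoted verbatim as Theorem 4.32 of Hatcher's book, so there is no internal argument to compare against. What you have written is essentially the standard textbook proof — indeed, for the isomorphism $h_n^X$ it is Hatcher's own proof: pass to a CW model with trivial $(n-1)$-skeleton, observe $X^{(n)}=\bigvee_\alpha \mathbb{S}^n_\alpha$ has $\pi_n$ and $H_n$ free abelian on the cells with $h_n$ matching bases, and check that attaching $(n+1)$-cells quotients both groups by compatible subgroups while higher cells change neither. Be aware that the two facts you assert in passing (that $\pi_n$ of a wedge of $n$-spheres is free abelian on the inclusions, and that the kernel of $\pi_n(X^{(n)})\longrightarrow \pi_n(X^{(n+1)})$ is exactly the subgroup generated by the attaching classes, with no normal-closure or $\pi_1$-action subtlety since $X^{(n)}$ is simply connected) both rest on homotopy excision (Blakers--Massey), so your argument is not independent of that machinery — but that is precisely the input Hatcher uses, so the reliance is appropriate rather than circular. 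The genuinely valuable part of your write-up is the last paragraph: the epimorphism statement for $h_{n+1}^X$ is \emph{not} literally part of Hatcher's Theorem 4.32 (which gives only the vanishing below degree $n$ and the isomorphism in degree $n$, absolutely and relatively), and the paper uses it without further comment, e.g.\ in Lemma \ref{3}. Your deduction is correct: the pair $(X,X^{(n)})$ is $n$-connected with $X^{(n)}$ simply connected (here the choice of model with trivial $1$-skeleton matters when $n=2$), so the relative Hurewicz theorem gives $\pi_{n+1}(X,X^{(n)})\cong H_{n+1}(X,X^{(n)})$; since $H_{n+1}(X^{(n)})=0$ for the $n$-dimensional skeleton, $H_{n+1}(X)$ injects into $H_{n+1}(X,X^{(n)})$, and the injectivity of $h_n$ on $X^{(n)}$ together with naturality of the Hurewicz maps with respect to the boundary homomorphisms makes your chase lift any class of $H_{n+1}(X)$ to $\pi_{n+1}(X)$ and shows the lift realizes the given class. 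So your proof fills in, correctly and by the standard route, exactly the portion of the cited statement that the citation alone does not cover.
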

\begin{theorem}\cite[page 91]{Hil} .\label{44}
For all $2\leq r\leq p+q+\min \{ p,q\} -3$, we have
\[
\pi_r (\mathbb{S}^p \vee \mathbb{S}^q )\cong \pi_r (\mathbb{S}^p )\oplus \pi_r (\mathbb{S}^q )\oplus \pi_r (\mathbb{S}^{p+q-1}).
\]
\end{theorem}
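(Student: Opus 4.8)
The statement is a low-dimensional special case of the Hilton--Milnor theorem, and the plan is to derive it directly from the long exact sequence of a pair together with two standard range theorems (Blakers--Massey excision and Freudenthal suspension), so that no machinery of basic products is needed. Throughout I would write $W=\mathbb{S}^p\vee\mathbb{S}^q$ and $X=\mathbb{S}^p\times\mathbb{S}^q$, and equip $X$ with the product CW-structure, whose $(p+q-1)$-skeleton is exactly $W$; thus $X$ is obtained from $W$ by attaching a single $(p+q)$-cell along the Whitehead product $[\iota_p,\iota_q]\colon\mathbb{S}^{p+q-1}\lo W$.

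First I would record the two easy inputs. Since $\pi_r(X)\cong\pi_r(\mathbb{S}^p)\oplus\pi_r(\mathbb{S}^q)$ for a product, and the inclusions $\mathbb{S}^p,\mathbb{S}^q\hookrightarrow W$ compose with $j\colon W\hookrightarrow X$ to the two factor inclusions, the map $j_*\colon\pi_r(W)\lo\pi_r(X)$ admits the obvious section $(\alpha,\beta)\mapsto (i_p)_*\alpha+(i_q)_*\beta$ (legitimate since $r\geq 2$, where $\pi_r$ is abelian). Hence $j_*$ is a split epimorphism, both at level $r$ and at level $r+1$. Feeding this into the long exact sequence of the pair $(X,W)$ forces the map $\pi_{r+1}(X)\lo\pi_{r+1}(X,W)$ to vanish, so the boundary $\partial\colon\pi_{r+1}(X,W)\lo\pi_r(W)$ is injective and the sequence collapses to a short split exact sequence. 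This yields $\pi_r(W)\cong\pi_r(X)\oplus\pi_{r+1}(X,W)$.

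It then remains to identify $\pi_{r+1}(X,W)$ in the stated range. The pair $(X,W)$ is $(p+q-1)$-connected and $W$ is $(\min\{p,q\}-1)$-connected, so Blakers--Massey excision identifies $\pi_i(X,W)$ with $\pi_i(X/W)=\pi_i(\mathbb{S}^{p+q})$ for $i\leq p+q+\min\{p,q\}-2$; taking $i=r+1$, this holds precisely when $r\leq p+q+\min\{p,q\}-3$. Finally, Freudenthal suspension gives $\pi_{r+1}(\mathbb{S}^{p+q})\cong\pi_r(\mathbb{S}^{p+q-1})$ throughout this range, since $r\leq p+q+\min\{p,q\}-3$ lies inside the Freudenthal range $r\leq 2(p+q)-4$ (the two differ by $\max\{p,q\}-1\geq 1$). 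Assembling the three isomorphisms produces $\pi_r(W)\cong\pi_r(\mathbb{S}^p)\oplus\pi_r(\mathbb{S}^q)\oplus\pi_r(\mathbb{S}^{p+q-1})$.

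The main obstacle is bookkeeping the two thresholds so that they match the stated upper bound $p+q+\min\{p,q\}-3$ exactly: this is precisely where the value of the constant originates, and one must check that neither the Blakers--Massey nor the Freudenthal bound falls below it. Conceptually the same number reappears in the Hilton--Milnor decomposition as the first dimension $p+q+\min\{p,q\}-2$ of a weight-three basic product, namely $[\iota_p,[\iota_p,\iota_q]]$ or $[\iota_q,[\iota_p,\iota_q]]$, whose contribution $\pi_r(\mathbb{S}^{p+q+\min\{p,q\}-2})$ first becomes nonzero just above the range; I would use this agreement as an independent check on the arithmetic.
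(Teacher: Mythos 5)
Your argument is correct, but note that the paper itself contains no proof of this statement: it is quoted verbatim from Hilton's book \cite[page 91]{Hil}, so there is nothing internal to compare against. What you have written is in fact the classical derivation (essentially the one in Hilton's text): split the long exact sequence of the pair $(\mathbb{S}^p\times\mathbb{S}^q,\mathbb{S}^p\vee\mathbb{S}^q)$ using the section of $j_*$ coming from $\pi_r(\mathbb{S}^p\times\mathbb{S}^q)\cong\pi_r(\mathbb{S}^p)\oplus\pi_r(\mathbb{S}^q)$, identify $\pi_{r+1}(X,W)$ with $\pi_{r+1}(X/W)=\pi_{r+1}(\mathbb{S}^{p+q})$ by Blakers--Massey, and desuspend by Freudenthal. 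The bookkeeping checks out: the pair is $(p+q-1)$-connected and $W$ is $(\min\{p,q\}-1)$-connected, so the excision isomorphism holds for $r+1\leq p+q+\min\{p,q\}-2$, which is exactly the stated bound and is where the constant $-3$ comes from; the Freudenthal threshold $r\leq 2(p+q)-4$ is strictly weaker. Two small remarks. First, your claim that the two bounds ``differ by $\max\{p,q\}-1\geq 1$'' requires $\max\{p,q\}\geq 2$; this is harmless, since when $p=q=1$ the range $2\leq r\leq p+q+\min\{p,q\}-3$ is empty, and in any case containment of ranges only needs the difference to be $\geq 0$, i.e.\ $\max\{p,q\}\geq 1$. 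Second, your argument never uses simple connectivity of the wedge (only $r\geq 2$, so that the homotopy groups are abelian and the section is a homomorphism, and the form of Blakers--Massey valid for $s$-connected subcomplexes with $s\geq 0$), so it covers the case $\min\{p,q\}=1$ as well; the paper only ever invokes the theorem with $p,q\geq 2$. Your closing consistency check via the first weight-three basic product $[\iota_p,[\iota_p,\iota_q]]$ of dimension $p+q+\min\{p,q\}-2$ is also correct and is a good sanity check on why the range cannot be extended.
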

\begin{theorem}\cite[Proposition 2.6.15]{Bau1}.\label{2}
A simply connected space $X$ is homotopy equivalent to a one point union of Moore spaces if and only if $h_{n}^{X}$ is split surjective for all $n>1$.
\end{theorem}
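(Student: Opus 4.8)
The plan is to prove the two implications separately, treating the ``only if'' direction as a warm-up and reserving the main effort for the ``if'' direction.

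For the forward implication, suppose $X \simeq \bigvee_\alpha M(A_\alpha, n_\alpha)$ with each $n_\alpha \geq 2$. I would first record that for a single Moore space the Hurewicz map is even an isomorphism in its critical degree: realizing $M(A,n)$ from a free resolution $0 \to F_1 \to F_0 \to A \to 0$ (using that a subgroup of a free abelian group is free), the $n$-skeleton is $\bigvee_{F_0}\mathbb{S}^n$ with $\pi_n \cong F_0$ by Theorem \ref{1}, and attaching the $(n+1)$-cells along generators of $F_1$ kills exactly $F_1$, so $h_n^{M(A,n)}:\pi_n(M(A,n)) \cong F_0/F_1 = A$ is an isomorphism (while $h_k$ is trivially split for $k \neq n$, $k>1$, since $\tilde H_k$ vanishes). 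Writing $\iota_\alpha: M(A_\alpha,n) \hookrightarrow X$ for the wedge inclusions of the summands with $n_\alpha = n$, I would assemble the map $s := \sum_\alpha (\iota_\alpha)_* \circ (h_n^{M(A_\alpha,n)})^{-1} : H_n(X) = \bigoplus_\alpha A_\alpha \to \pi_n(X)$, which is well defined because $\pi_n(X)$ is abelian; naturality of the Hurewicz map together with the fact that $(\iota_\alpha)_*$ is the inclusion of the $\alpha$-summand on $H_n$ yields $h_n^X \circ s = \mathrm{id}$, so $h_n^X$ is split surjective.

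For the reverse implication, assume $h_n^X$ is split surjective for every $n > 1$, with chosen sections $s_n: H_n(X) \to \pi_n(X)$, and build a homology decomposition of $X$ out of Moore spaces. For each $n \geq 2$ choose a free resolution $0 \to F_1^n \to F_0^n \xrightarrow{q_n} H_n(X) \to 0$ and let $M_n := M(H_n(X),n)$ be the corresponding Moore space with $n$-skeleton $\bigvee_{F_0^n}\mathbb{S}^n$. The composite $s_n \circ q_n: F_0^n \to \pi_n(X)$ defines a map $g_n: \bigvee_{F_0^n}\mathbb{S}^n \to X$; since the $(n+1)$-cells of $M_n$ are attached along elements of $F_1^n = \ker q_n$, each such attaching class maps under $(g_n)_* = s_n \circ q_n$ to $s_n(0)=0$, so $g_n$ extends to a map $f_n: M_n \to X$. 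A diagram chase with the naturality square of the Hurewicz map, using $h_n^X \circ s_n = \mathrm{id}$ and $h_n^{M_n}$ an isomorphism, shows that $(f_n)_*: H_n(M_n) = H_n(X) \to H_n(X)$ is the identity.

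Finally I would assemble $f := \bigvee_{n \geq 2} f_n : \bigvee_{n\geq 2} M_n \to X$. Because each $M_n$ has reduced homology concentrated in degree $n$, the wedge satisfies $H_k(\bigvee_n M_n) \cong H_k(X)$ for every $k$, and by the previous step $f_*$ is the identity there; thus $f$ induces isomorphisms on all homology groups. Both the wedge (of simply connected Moore spaces) and $X$ are simply connected CW-complexes, so Whitehead's theorem upgrades the homology isomorphism to a homotopy equivalence, giving $X \simeq \bigvee_{n\geq 2} M_n$, a one point union of Moore spaces. I expect the main obstacle to be precisely this reverse direction — namely, arranging the individually constructed $f_n$ (the extension-over-cells step and the naturality bookkeeping for $(f_n)_* = \mathrm{id}$) so that the wedge map is a homology isomorphism — rather than the concluding application of Whitehead's theorem, which is then routine.
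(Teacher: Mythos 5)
Your proof is essentially correct, but there is nothing in the paper to compare it against line by line: the paper states Theorem \ref{2} as an imported result, citing \cite[Proposition 2.6.15]{Bau1}, and gives no proof of its own (Baues derives it within his general theory of homology decompositions). Your blind argument is the standard direct proof, and it holds up. In the forward direction your cell-by-cell warm-up can be shortened: $M(A,n)$ is $(n-1)$-connected, so $h_n^{M(A,n)}$ is an isomorphism immediately by Theorem \ref{1}, and the only real content is the naturality identity $h_n^X\circ(\iota_\alpha)_* = (\iota_\alpha)_*\circ h_n^{M(A_\alpha,n)}$, which you use correctly; your section $s$ is well defined even for infinite wedges, since a homomorphism out of $\bigoplus_\alpha A_\alpha$ is determined by its restrictions to the summands. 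In the reverse direction the construction is sound: $(g_n)_* = s_n\circ q_n$ because both are homomorphisms out of the free abelian group $F_0^n$ agreeing on a basis; the attaching classes lie in $F_1^n=\ker q_n$, so the extension over the $(n+1)$-cells exists (basepoint issues are vacuous since $\bigvee_{F_0^n}\mathbb{S}^n$ is simply connected for $n\geq 2$); and the chase $(f_n)_*\circ h_n^{M_n}\circ j_* = h_n^X\circ s_n\circ q_n = q_n$ with $q_n$ surjective does force $(f_n)_*=\mathrm{id}$ on $H_n$. Two hypotheses deserve to be made explicit: that $X$ has the homotopy type of a CW-complex, which the homology form of the Whitehead theorem requires (and which is among the paper's standing assumptions), and that $\tilde{H}_k$ of a wedge is the direct sum of the $\tilde{H}_k$ of the summands, which is what makes your $f$ a homology isomorphism in every degree. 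Compared with Baues's treatment, your route buys elementarity --- only the Hurewicz theorem, cell attachment, and Whitehead's theorem --- at the cost of not situating the splitting hypothesis in the framework of homology decompositions, where it is exactly the condition that the attaching data of the decomposition vanish.
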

Recall that an epimomorphism $f:G\longrightarrow H$ is called split surjective if the following short exact sequence is split
\[
0\rightarrow Ker\: f\hookrightarrow G\xrightarrow{f} H \rightarrow 0.
\]
\begin{theorem}\cite{Ber}.\label{4}
Let $X$ be a topological space which is homotopy dominated by a closed (compact without boundary) connected topological $n$-dimensional manifold $M$. If $H^n (X;\mathbb{Z}_2 )\neq 0$, then $X$ has the homotopy type of $M$.
\end{theorem}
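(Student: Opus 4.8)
Write the homotopy domination as a pair of maps $\phi\colon X\to M$ and $\psi\colon M\to X$ with $\psi\phi\simeq \mathrm{id}_X$, and set $g=\phi\psi\colon M\to M$. Since $g^2=\phi(\psi\phi)\psi\simeq\phi\psi=g$, the self-map $g$ is a homotopy idempotent. The plan is to prove that $g$ is a homotopy equivalence; this already suffices, because if $g$ has a homotopy inverse $h$ then $\phi$ is a right homotopy inverse of $\psi$ (from $\psi\phi\simeq\mathrm{id}_X$) while $h\phi$ is a left homotopy inverse (from $(h\phi)\psi=h(\phi\psi)=hg\simeq\mathrm{id}_M$), so $\psi\colon M\to X$ is a homotopy equivalence and $X\simeq M$. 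Note also that, $g$ being idempotent, ``$g$ is a homotopy equivalence'' is the same as ``$g\simeq\mathrm{id}_M$''.

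First I would pin down the top-dimensional mod $2$ cohomology. As $M$ is a closed connected $n$-manifold, mod $2$ Poincar\'e duality gives $H^n(M;\mathbb{Z}_2)\cong\mathbb{Z}_2$ together with a nondegenerate cup-product pairing $H^i(M;\mathbb{Z}_2)\otimes H^{n-i}(M;\mathbb{Z}_2)\to H^n(M;\mathbb{Z}_2)$ (this is exactly where a non-orientable $M$ forces us to work mod $2$ rather than over $\mathbb{Z}$). From $\psi\phi\simeq\mathrm{id}_X$ we get $\phi^*\psi^*=\mathrm{id}$ on $H^*(X;\mathbb{Z}_2)$, so $\phi^*$ is onto and $\psi^*$ is injective. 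In degree $n$ the surjection $\phi^*\colon\mathbb{Z}_2\to H^n(X;\mathbb{Z}_2)$ has nonzero target by hypothesis, hence is an isomorphism, and then $\psi^*$ is an isomorphism in degree $n$ as well. Consequently $g^*=\psi^*\phi^*$ is an isomorphism of $H^n(M;\mathbb{Z}_2)$.

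Next I would propagate this down through all degrees using duality. The map $g^*$ is a ring endomorphism of $H^*(M;\mathbb{Z}_2)$ that is an isomorphism on $H^n$. If $g^*\alpha=0$ for some nonzero $\alpha\in H^i(M;\mathbb{Z}_2)$, choose $\beta\in H^{n-i}(M;\mathbb{Z}_2)$ with $\alpha\cup\beta\neq0$ by nondegeneracy; then $g^*(\alpha\cup\beta)=g^*\alpha\cup g^*\beta=0$, contradicting that $g^*$ is injective on $H^n$. Hence $g^*$ is injective, and by finite-dimensionality it is an isomorphism of the whole ring $H^*(M;\mathbb{Z}_2)$; being idempotent it must be the identity. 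The same argument shows $\phi^*$ and $\psi^*$ are isomorphisms, so $X$ carries the mod $2$ cohomology of $M$ and is itself a mod $2$ Poincar\'e duality space of dimension $n$.

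The hard part is to upgrade this mod $2$ cohomology isomorphism to an actual homotopy equivalence, i.e. to show the idempotent $g$ is trivial; mod $2$ cohomology alone is far too weak (a degree-$3$ self-map of $\mathbb{S}^1$ is a mod $2$ cohomology isomorphism), and it is precisely the \emph{idempotency} of $g$ that must be exploited. I would aim at Whitehead's theorem, which reduces the task to showing that $g$ induces isomorphisms on $\pi_1$ and on the homology of the universal cover. The genuine obstacle is $\pi_1$: here $g_*$ is an idempotent endomorphism of the finitely presented group $\pi_1(M)$ with image $\pi_1(X)$, and nothing purely group-theoretic forbids a \emph{proper} retract (free groups retract onto proper free factors). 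To exclude this one must feed the Poincar\'e-duality structure back in, using that the preserved top mod $2$ class obstructs $\pi_1(M)$ from retracting properly; granting that $g_*$ is then an automorphism of $\pi_1(M)$, idempotency forces $g_*=\mathrm{id}$ on $\pi_1$. One may then lift $g$ to the (simply connected, hence orientable) universal cover, where the lift is again a homotopy idempotent that is a mod $2$ homology isomorphism and, by combining integral Poincar\'e duality on the cover with the universal coefficient theorem, an integral homology isomorphism; Whitehead's theorem then gives $g\simeq\mathrm{id}_M$, and by the reduction of the first paragraph $X\simeq M$.
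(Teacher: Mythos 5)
The paper does not prove this statement at all: Theorem~\ref{4} is quoted from Berstein and Ganea \cite{Ber}, so there is no internal proof to compare against, and your attempt must be judged on its own. Its first half is correct and is indeed the classical opening of the Berstein--Ganea argument: the reduction to showing the homotopy idempotent $g=\phi\psi$ is a homotopy equivalence, the use of mod $2$ Poincar\'e duality ($H^n(M;\mathbb{Z}_2)\cong\mathbb{Z}_2$ with nondegenerate cup pairing, valid without orientability), and the cup-product propagation showing $g^*$, $\phi^*$, $\psi^*$ are isomorphisms of mod $2$ cohomology with $g^*=\mathrm{id}$ are all sound.

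The second half, however, contains two genuine gaps, located exactly where the theorem is hard. First, the $\pi_1$ step is conceded rather than proved: ``granting that $g_*$ is then an automorphism'' is the crux of the whole theorem, and the mechanism you gesture at is false as stated. Preservation of the top mod $2$ class does \emph{not} obstruct $\pi_1(M)$ from retracting properly: lifting $g$ to the cover $\hat M\to M$ associated to the image of $g_*$ and pushing the mod $2$ fundamental class around only rules out subgroups of infinite index (where $H_n(\hat M;\mathbb{Z}_2)=0$) and of even index (where the transfer kills the class); odd index survives. Concretely, the composite $c\colon L(3,1)\to \mathbb{S}^3\to L(3,1)$ (collapse the $2$-skeleton, then the universal covering projection) is a self-map of a closed $3$-manifold of mod $2$ degree $3\equiv 1$, preserving the top $\mathbb{Z}_2$-class, with $c_*=0$ on $\pi_1\cong\mathbb{Z}_3$. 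So idempotency must enter the $\pi_1$ argument in an essential and delicate way --- and there one meets the further complication that $g^2\simeq g$ holds only as a free homotopy, so $g_*\circ g_*$ agrees with $g_*$ only up to conjugation (the Freyd--Heller subtlety), which you never address. Second, the final step is invalid as written: an idempotent endomorphism of an abelian group that induces isomorphisms after $\otimes\,\mathbb{Z}_2$ and on $\mathrm{Tor}(-,\mathbb{Z}_2)$ can have as kernel any $2$-divisible, $2$-torsion-free group (e.g.\ $\mathbb{Z}_3$ or $\mathbb{Z}[1/2]$), and $H_*(\tilde M;\mathbb{Z})$ need not be finitely generated when $\pi_1(M)$ is infinite, so ``mod $2$ homology isomorphism $+$ idempotent $\Rightarrow$ integral homology isomorphism'' fails. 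Worse, the premise that the lift $\tilde g$ is a mod $2$ homology isomorphism of $\tilde M$ is itself unjustified: your duality argument lives on the compact manifold $M$, while $\tilde M$ is noncompact when $\pi_1$ is infinite, where duality pairs compactly supported cohomology with homology and there is no top class to preserve. A correct completion has to exploit that $H_*(\tilde X)$ is a $\mathbb{Z}[\pi_1]$-module retract of $H_*(\tilde M)$ together with duality over the group ring, which is substantially more than the outline provides.
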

\begin{lemma}\cite[Example 2.43]{Hat}.\label{K}
Let $X$ be the Eilenberg-MacLane space $K(\mathbb{Z}_m ,1)$. Then $H_n (X)\cong\mathbb{Z}_m$ for odd $n$ and $H_n (X)\cong 0$ for even $n>0$.
\end{lemma}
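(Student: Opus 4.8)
The plan is to identify the Eilenberg--MacLane space $K(\Z_m,1)$ with a concrete model whose cellular chain complex can be written down explicitly, and then read off the homology directly. The model I would use is the infinite lens space $L=\mathbb{S}^\infty/\Z_m$, where $\Z_m$ acts freely on the unit sphere $\mathbb{S}^\infty\subset\mathbb{C}^\infty$ by multiplication by $m$-th roots of unity. Since $\mathbb{S}^\infty$ is contractible and the action is free, the quotient $L$ is aspherical with $\pi_1(L)\cong\Z_m$, so $L$ is a $K(\Z_m,1)$; by uniqueness of Eilenberg--MacLane spaces up to homotopy equivalence it then suffices to compute $H_n(L)$.

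First I would equip $\mathbb{S}^\infty$ with the standard $\Z_m$-equivariant CW structure arising from the chain of inclusions $\mathbb{S}^1\subset\mathbb{S}^3\subset\mathbb{S}^5\subset\cdots$, in which there are exactly $m$ cells in each dimension, permuted freely by the group. Passing to the quotient $L$ then yields a CW complex with a single cell in each dimension $0,1,2,\dots$, so its cellular chain complex has $C_n\cong\Z$ for every $n\ge 0$. Equivalently, lifting to the universal cover identifies this data with the classical $2$-periodic free resolution of $\Z$ over the group ring $\Z[\Z_m]$,
\[
\cdots \lo \Z[\Z_m] \st{N}{\lo} \Z[\Z_m] \st{T-1}{\lo} \Z[\Z_m] \st{N}{\lo} \Z[\Z_m] \st{T-1}{\lo} \Z[\Z_m] \st{\e}{\lo} \Z \lo 0,
\]
where $T$ generates $\Z_m$, the norm element is $N=1+T+\cdots+T^{m-1}$, and $\e$ is the augmentation.

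The homology $H_n(L)=H_n(\Z_m;\Z)$ is then computed by tensoring this resolution with the trivial module $\Z$ over $\Z[\Z_m]$, i.e.\ by taking coinvariants. Under $-\otimes_{\Z[\Z_m]}\Z$ the generator $T$ acts as the identity, so the operator $T-1$ becomes the zero map while $N$ becomes multiplication by $m$. Thus the reduced complex is
\[
\cdots \lo \Z \st{m}{\lo} \Z \st{0}{\lo} \Z \st{m}{\lo} \Z \st{0}{\lo} \Z,
\]
with boundary $\partial_n=0$ for $n$ odd and $\partial_n=m$ for $n>0$ even. Reading off its homology gives $H_0\cong\Z$, and for $n>0$: in odd degrees $\ker\partial_n/\T{im}\,\partial_{n+1}=\Z/m\Z\cong\Z_m$, while in even degrees $\ker\partial_n/\T{im}\,\partial_{n+1}=0/0=0$, which is exactly the claimed answer.

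The only genuinely delicate point is justifying the form of the boundary maps---equivalently, the $2$-periodicity of the resolution. I would establish this either geometrically, by computing the degrees of the cellular attaching maps in the lens space (the even cell attaches by a map of degree $m$ coming from the $m$-fold wrapping, the odd cell by a degree $0$ map), or, more algebraically, by verifying directly that the displayed sequence over $\Z[\Z_m]$ is exact: exactness at each stage reduces to the elementary facts that $\ker(T-1)=\T{im}\,N$ and $\ker N=\T{im}(T-1)$ inside $\Z[\Z_m]$, together with $\ker\e=\T{im}(T-1)$. Once periodicity is in hand, the homology computation above is immediate.
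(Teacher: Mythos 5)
Your proof is correct and is essentially the argument of the paper's cited source (Hatcher, Example 2.43): the paper gives no independent proof, and Hatcher computes the homology of $K(\mathbb{Z}_m,1)$ precisely via the infinite lens space $\mathbb{S}^\infty/\mathbb{Z}_m$, with one cell in each dimension and cellular boundary maps alternating between $0$ and multiplication by $m$. Your algebraic reformulation via the $2$-periodic free resolution of $\mathbb{Z}$ over $\mathbb{Z}[\mathbb{Z}_m]$ is just the standard group-homology restatement of that same cellular computation.
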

\section{The Capacity of Product of Two Spheres}
In this section, we compute the capacity of product of two spheres of the same or different dimensions.
\begin{lemma}\label{100}
The capacity of $\mathbb{S}^1 \times \mathbb{S}^n$ is equal to $4$, for $n\geq 2$.
\end{lemma}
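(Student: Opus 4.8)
The plan is to compute the capacity directly, by determining up to homotopy every CW-complex homotopy dominated by $X=\mathbb{S}^1\times\mathbb{S}^n$. First I would exhibit four pairwise inequivalent dominated types: the one-point space, $\mathbb{S}^1$, $\mathbb{S}^n$, and $X$ itself. The point is dominated trivially and $X$ dominates itself; the spheres $\mathbb{S}^1=\mathbb{S}^1\times\{*\}$ and $\mathbb{S}^n=\{*\}\times\mathbb{S}^n$ are in fact retracts of $X$ via the inclusions and the two projections, so each is dominated. These four are pairwise non-homotopy-equivalent because for $n\geq2$ their nonzero reduced homology sits in the distinct degrees $1$, $n$, $n+1$. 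Hence $C(X)\geq 4$, and the real content of the lemma is the reverse inequality: every $Y$ dominated by $X$ is homotopy equivalent to one of these four.

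For the upper bound, let $f\colon Y\to X$ and $g\colon X\to Y$ satisfy $g\circ f\simeq \mathrm{id}_Y$, where $Y$ may be taken to be a CW-complex. Applying $\pi_1$ shows that $\pi_1(Y)$ is a retract of $\pi_1(X)=\mathbb{Z}$, so $\pi_1(Y)$ is either trivial or $\mathbb{Z}$. In the simply connected case I would lift the domination to the universal cover: since $Y$ is simply connected, $f$ lifts to $\hat f\colon Y\to\tilde X$ through the covering $p\colon\tilde X\to X$, and then $(\hat f,\ g\circ p)$ is a domination of $Y$ by $\tilde X\simeq\mathbb{S}^n$. By Borsuk's computation $C(\mathbb{S}^n)=2$ (equivalently, a simply connected $Y$ dominated by $\mathbb{S}^n$ has homology a summand of $H_*(\mathbb{S}^n)$, hence is $M(\mathbb{Z},n)=\mathbb{S}^n$ or a point), we get $Y\simeq\ast$ or $Y\simeq\mathbb{S}^n$.

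The crucial case is $\pi_1(Y)=\mathbb{Z}$. Here $g_*f_*=\mathrm{id}$ on $\mathbb{Z}$ forces $f_*$ and $g_*$ to be mutually inverse isomorphisms of $\pi_1$, so the domination lifts to a $\mathbb{Z}[\mathbb{Z}]$-equivariant domination of $\tilde Y$ by $\tilde X\simeq\mathbb{S}^n$. Consequently each $H_i(\tilde Y)$ is a $\mathbb{Z}[\mathbb{Z}]$-module direct summand of $H_i(\tilde X)\cong H_i(\mathbb{S}^n)$, which gives $H_i(\tilde Y)=0$ for $i\neq0,n$ and $H_n(\tilde Y)\in\{0,\mathbb{Z}\}$ with trivial $\mathbb{Z}$-action. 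If $H_n(\tilde Y)=0$, then $\tilde Y$ is contractible, so $Y$ is aspherical with $\pi_1=\mathbb{Z}$ and thus $Y\simeq K(\mathbb{Z},1)=\mathbb{S}^1$. If $H_n(\tilde Y)=\mathbb{Z}$, then $\tilde Y\simeq M(\mathbb{Z},n)=\mathbb{S}^n$ by Theorem \ref{-2}, and from the Wang exact sequence of the fibration $\tilde Y\to Y\to\mathbb{S}^1$ with $\mathbb{Z}_2$ coefficients (where the monodromy necessarily acts trivially on $H_n(\tilde Y;\mathbb{Z}_2)=\mathbb{Z}_2$) one obtains $H_{n+1}(Y;\mathbb{Z}_2)\cong\mathbb{Z}_2$, hence $H^{n+1}(Y;\mathbb{Z}_2)\neq0$. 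Since $Y$ is dominated by the closed connected $(n+1)$-manifold $X$, Theorem \ref{4} then yields $Y\simeq X$.

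I expect the genuine obstacle to be this last case: controlling the action of $\pi_1=\mathbb{Z}$ on the top homology of the universal cover. The equivariant module-retract argument is exactly what rules out the non-dominated candidates with $\pi_1=\mathbb{Z}$ — for instance the ``twisted'' $\mathbb{S}^n$-bundle over $\mathbb{S}^1$, whose monodromy acts by $-1$ on $H_n(\tilde Y)$, and $\mathbb{S}^1\vee\mathbb{S}^n$, whose cover has $H_n(\tilde Y)\cong\mathbb{Z}[\mathbb{Z}]$ — neither of which is an equivariant summand of the trivial module $\mathbb{Z}$. This reduces the naively possible combinations of summands of $H_*(X)$ down to the four listed homotopy types, and the appeal to Theorem \ref{4} is what finally upgrades the homological data to an honest homotopy equivalence $Y\simeq\mathbb{S}^1\times\mathbb{S}^n$.
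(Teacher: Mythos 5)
Your proof is correct, and while it shares the paper's basic skeleton (split on $\pi_1(Y)\in\{1,\mathbb{Z}\}$, pass to universal covers, use $C(\mathbb{S}^n)=2$, and invoke the Berstein--Ganea result, Theorem \ref{4}, to upgrade nonvanishing top $\mathbb{Z}_2$-cohomology to $Y\simeq \mathbb{S}^1\times\mathbb{S}^n$), it handles the crucial case by a genuinely different mechanism. The paper splits \emph{first} on $H_{n+1}(X)$, which it controls directly as a direct summand of $H_{n+1}(P)\cong\mathbb{Z}$ via the domination: if $H_{n+1}(X)\cong\mathbb{Z}$, the universal coefficient theorem plus Theorem \ref{4} gives $X\simeq P$ at once; then, in the residual case $\pi_1(X)\cong\mathbb{Z}$ with $\widetilde{X}\simeq\mathbb{S}^n$, it argues \emph{by contradiction}, using the Hopfian property and Whitehead's theorem applied to the domination map (isomorphisms on $\pi_1$ and on $H_i$ of universal covers) to force $X\simeq P$ against the standing assumption $H_{n+1}(X)=0$. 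You instead never invoke the summand property of $H_{n+1}(Y)$ at all: in the case $\pi_1(Y)\cong\mathbb{Z}$, $\widetilde{Y}\simeq\mathbb{S}^n$, you produce $H^{n+1}(Y;\mathbb{Z}_2)\neq 0$ \emph{unconditionally} from the Wang sequence of the fibration $\widetilde{Y}\to Y\to\mathbb{S}^1$ (the monodromy being automatically trivial on $H_n(\widetilde{Y};\mathbb{Z}_2)\cong\mathbb{Z}_2$), and then apply Theorem \ref{4} affirmatively rather than by contradiction. This buys a more transparent proof: it explains structurally why the only $\pi_1=\mathbb{Z}$ candidates are $\mathbb{S}^1$ and $P$ itself (your $\mathbb{Z}[\mathbb{Z}]$-module retract argument explicitly excludes the twisted $\mathbb{S}^n$-bundle and $\mathbb{S}^1\vee\mathbb{S}^n$, which the paper rules out only implicitly), and it avoids the Whitehead-with-covers contradiction. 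The paper's route is shorter given its toolkit, since the summand facts for $\pi_1$ and $H_i$ do the case-elimination in one stroke. Two small points you should make explicit to match the paper's rigor: the lifted composite $\hat g\hat f$ is only a deck transformation $t^k$ up to homotopy, so one must replace $\hat g$ by $t^{-k}\hat g$ to get an honest (equivariant) domination of $\widetilde{Y}$ by $\widetilde{X}$; and the Wang sequence applies because $Y$ is homotopy equivalent to the mapping torus of the generating deck transformation (equivalently, $\widetilde{Y}$ is the homotopy fiber of the classifying map $Y\to K(\mathbb{Z},1)=\mathbb{S}^1$). You also supply the lower bound $C(P)\geq 4$ explicitly via retracts, which the paper leaves implicit.
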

\begin{proof}
Put $P=\mathbb{S}^1 \times \mathbb{S}^n$. By the Kunneth formula (see, for example, \cite[Theorem 3B.6]{Hat}), we know that
\[
H_i (P)\cong
\begin{cases}
\mathbb{Z}, & i=0,1,n,n+1 \\
0, & \text{otherwise}.
\end{cases}
\]
Suppose that $X$ is homotopy dominated by $P$ and $\tilde{X}$  denotes the universal covering space of $X$. Then $\pi_1 (X)$ and $H_{i}(X)$ are isomorphic to a direct suumand of $\pi_1 (P)\cong \mathbb{Z}$ and of $H_i (P)$, respectively, for all $i\geq 2$.

First, let $H_{n+1}(X)\cong \mathbb{Z}$. By the universal coefficient theorem for cohomology (see, for example, \cite[Theorem 3.2]{Hat}), we have
\[
H^{n+1} (X;\mathbb{Z}_2 )\cong Hom (H_{n+1}(X),\mathbb{Z}_2 )\cong \mathbb{Z}_2 \neq 0.
\]
Then, since $P$ is a closed compact connected $n+1$-dimensional topological manifold, $X$ and $P$ have the same homotopy type  by Theorem \ref{4}.

Second, let $H_{n+1} (X)=0$. Then we just have  the following cases:

Case One. $\pi_1 (X)=1$ and $H_i (X)=0$ for all $i\geq 2$. Then  by the Whitehead Theorem (see \cite[Corollary 4.33]{Hat}), $X$ and  $\{ *\}$ have the same homotopy type.

Case Two. $\pi_1 (X)=1$, $H_n (X)\cong \mathbb{Z}$ and $H_i (X)=0$ for all $i\neq n$. Then $X$ is the Moore space $M(\mathbb{Z},n)$ and so, $X$ has the homotopy type of  $\mathbb{S}^n$.

Case Three. $\pi_1 (X)\cong \mathbb{Z}$. We know that $\tilde{P}=\mathbb{R}\times \mathbb{S}^n$ is the universal covering space of $P=\mathbb{S}^1 \times \mathbb{S}^n$ and $\tilde{X}$ is homotopy dominated by $\tilde{P}$. Since the capacity of a compactum is a homotopy invariant, so  $C(\tilde{P})=C(\mathbb{S}^n )=2$ and so  $\tilde{X}$ has the homotopy type of  $\{ *\}$ or $\tilde{P}$. If $\tilde{X}$ and $\tilde{P}$ have the same homotopy type, then the domination map $d_X :P\longrightarrow X$  induces isomorphims
\[
d_{X*} :\pi_1 (P)\longrightarrow \pi_1 (X),\quad \tilde{d}_{X*} : H_i (\tilde{P})\longrightarrow H_i (\tilde{X})
\]
 for all $i\geq 2$ (note that an epimorphism between isomorphic Hopfian groups is an isomorphism). Then by the Whitehead Theorem (see, for example, \cite[Theorem 3.7, p. 113]{Hil}), $d_X$ is a homotopy equivalence. Thus $X$ and $P$ have the same homotopy type which is a contradiction because $H_{n+1}(X)=0$. Hence $\tilde{X}$ has the homotopy type of  $\{ *\}$ and so, $X$ is the Eilenberg-MacLane space $K(\mathbb{Z},1)$. This shows that  $X$ has the homotopy type of $\mathbb{S}^1$.
\end{proof}
\begin{lemma}\label{101}
The capacity of $\mathbb{S}^n \times \mathbb{S}^n$ is equal to $3$, for all $n\geq 1$.
\end{lemma}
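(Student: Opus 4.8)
The plan is to follow the same strategy as in Lemma \ref{100}, organising the argument around the top homology group and reducing every other possibility to a Moore space. Write $P=\mathbb{S}^n\times\mathbb{S}^n$. The case $n=1$ is the torus, a closed orientable surface of genus $1$, whose capacity is $1+2=3$ by the computation of M. Abbasi et al. in \cite{Mah}; so I would dispose of it by citation and assume $n\geq 2$ from now on. By the K\"unneth formula, $H_0(P)\cong H_{2n}(P)\cong\mathbb{Z}$, $H_n(P)\cong\mathbb{Z}\oplus\mathbb{Z}$, and all other homology vanishes, while $P$ is simply connected and is a closed orientable $2n$-manifold. If $X$ is homotopy dominated by $P$, then $X$ is simply connected and each $H_i(X)$ is a direct summand of $H_i(P)$; in particular $H_i(X)=0$ for $i\neq 0,n,2n$, with $H_n(X)\in\{0,\mathbb{Z},\mathbb{Z}\oplus\mathbb{Z}\}$ and $H_{2n}(X)\in\{0,\mathbb{Z}\}$.

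First I would treat the case $H_{2n}(X)\cong\mathbb{Z}$. Exactly as in Lemma \ref{100}, the universal coefficient theorem gives $H^{2n}(X;\mathbb{Z}_2)\cong\mathrm{Hom}(H_{2n}(X),\mathbb{Z}_2)\cong\mathbb{Z}_2\neq 0$, so Theorem \ref{4} forces $X$ to have the homotopy type of $P$. In the remaining case $H_{2n}(X)=0$, the space $X$ is simply connected with a single non-vanishing reduced homology group, in degree $n$, hence is a Moore space $M(H_n(X),n)$; by Theorem \ref{-2} its homotopy type is determined by $H_n(X)$, and the three admissible values give $X\simeq\{*\}$, $X\simeq M(\mathbb{Z},n)=\mathbb{S}^n$, or $X\simeq M(\mathbb{Z}\oplus\mathbb{Z},n)=\mathbb{S}^n\vee\mathbb{S}^n$.

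The heart of the proof, and the step I expect to be the real obstacle, is to show that the last possibility $X\simeq\mathbb{S}^n\vee\mathbb{S}^n$ cannot occur, since the homological data alone does not exclude it. Here I would pass to the cohomology ring. A domination $r\circ i\simeq\mathrm{id}$ with $i:\mathbb{S}^n\vee\mathbb{S}^n\to P$ and $r:P\to\mathbb{S}^n\vee\mathbb{S}^n$ would induce ring homomorphisms with $i^*\circ r^*=\mathrm{id}$ on $H^*(\mathbb{S}^n\vee\mathbb{S}^n;\mathbb{Z})$, so $r^*$ is a split monomorphism of graded rings. In degree $n$ both groups are $\mathbb{Z}\oplus\mathbb{Z}$, and $i^*\circ r^*=\mathrm{id}$ means the integer matrix $M$ of $r^*$ has a left inverse, whence $\det M=\pm 1$ and $r^*(a),r^*(b)$ form a $\mathbb{Z}$-basis of $H^n(P)$ for generators $a,b$. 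On the other hand, every cup product of positive-degree classes vanishes in $H^*(\mathbb{S}^n\vee\mathbb{S}^n)$ because $H^{2n}(\mathbb{S}^n\vee\mathbb{S}^n)=0$, so $r^*(a)\cup r^*(a)=r^*(b)\cup r^*(b)=r^*(a)\cup r^*(b)=0$ in $H^{2n}(P)\cong\mathbb{Z}$. Thus the cup-product (intersection) form on $H^n(P)$ is identically zero in the basis $r^*(a),r^*(b)$. This is the contradiction I am after: for $P=\mathbb{S}^n\times\mathbb{S}^n$ the intersection form has Gram matrix $\left(\begin{smallmatrix}0&1\\(-1)^n&0\end{smallmatrix}\right)$ of determinant $\pm1$, hence is non-degenerate, and a change of basis by $M\in GL_2(\mathbb{Z})$ multiplies its determinant only by $(\det M)^2=1$, so the form can never become identically zero. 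Hence no such $r$ exists, and $\mathbb{S}^n\vee\mathbb{S}^n$ is not homotopy dominated by $P$.

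Finally I would assemble the count. The three homotopy types $\{*\}$, $\mathbb{S}^n$ and $P=\mathbb{S}^n\times\mathbb{S}^n$ are all genuinely dominated by $P$: the point trivially, $\mathbb{S}^n$ as a retract via $\mathbb{S}^n\hookrightarrow\mathbb{S}^n\times\{*\}$ followed by the first projection, and $P$ by the identity. Combined with the previous paragraphs, these are the only homotopy types dominated by $P$, so $C(\mathbb{S}^n\times\mathbb{S}^n)=3$. The only point demanding care is the cohomology-ring argument; everything else is a direct adaptation of Lemma \ref{100}.
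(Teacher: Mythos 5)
Your proposal is correct, and it diverges from the paper's proof precisely at the decisive step. The skeleton is the same in both arguments: the K\"unneth computation, the fact that $\pi_1(X)$ and each $H_i(X)$ are direct summands of those of $P$, the Berstein--Ganea theorem (Theorem \ref{4}) when $H_{2n}(X)\cong\mathbb{Z}$, and the Moore-space identification of the remaining candidates $\{*\}$, $\mathbb{S}^n$, $\mathbb{S}^n\vee\mathbb{S}^n$ (for $n=1$ the paper quotes \cite[Proposition 4.6]{Moh} on products of Eilenberg--MacLane spaces, while you quote the surface computation of \cite{Mah}; both dispose of the torus). But to exclude $\mathbb{S}^n\vee\mathbb{S}^n$ the paper argues on homotopy groups: by Hilton's theorem (Theorem \ref{44}, applicable since $2n-1\le 3n-3$ for $n\ge 2$), $\pi_{2n-1}(\mathbb{S}^n\vee\mathbb{S}^n)\cong\pi_{2n-1}(\mathbb{S}^n)\oplus\pi_{2n-1}(\mathbb{S}^n)\oplus\mathbb{Z}$, the extra $\mathbb{Z}$ coming from the Whitehead product, and this cannot be a direct summand of $\pi_{2n-1}(\mathbb{S}^n\times\mathbb{S}^n)\cong\pi_{2n-1}(\mathbb{S}^n)\oplus\pi_{2n-1}(\mathbb{S}^n)$ by a rank count, contradicting the retract property of homotopy groups under domination. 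You instead use the cohomology ring: your left-inverse argument correctly forces $r^*$ to be unimodular in degree $n$, all products in $H^*(\mathbb{S}^n\vee\mathbb{S}^n)$ vanish since $H^{2n}(\mathbb{S}^n\vee\mathbb{S}^n)=0$, and the resulting identically zero Gram matrix is incompatible with the nondegenerate (hyperbolic, determinant $\pm 1$) intersection form of the closed manifold $\mathbb{S}^n\times\mathbb{S}^n$, whose determinant changes only by $(\det M)^2=1$ under base change. Your route is more elementary and self-contained --- it needs neither Hilton's theorem nor the fact that homotopy groups of dominated spaces are retracts, only functoriality of cup products --- and it would transfer verbatim (indeed more simply, via the perfect pairing $H^n\otimes H^m\to H^{n+m}$) to the analogous exclusion steps in Lemmas \ref{102} and \ref{103}, where the paper again resorts to homotopy groups; what the paper's choice buys is uniformity with the toolkit already set up (Theorem \ref{44}) across the whole section. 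You also verify explicitly, as the paper leaves implicit, that the three surviving types $\{*\}$, $\mathbb{S}^n$, $P$ are genuinely dominated by $P$, so the count $C(P)=3$ is fully justified.
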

\begin{proof}
For the case $n=1$, $\mathbb{S}^1 \times \mathbb{S}^1$ is a product of Eilenberg-MacLane spaces and by  \cite[Proposition 4.6]{Moh},  $C(\mathbb{S}^1 \times \mathbb{S}^1 )=3$.

Let $n\geq 2$ and $P=\mathbb{S}^n \times \mathbb{S}^n$. By the Kunneth formula, we know that
\[
H_i (P)\cong
\begin{cases}
\mathbb{Z}, & i=0,2n\\
\mathbb{Z}\times \mathbb{Z}, & i=n\\
0 & \text{otherwise}.
\end{cases}
\]
Suppose $X$ is homotopy dominated by $P$. Then  $H_{i}(X)$ is isomorphic to a direct suumand of $H_i (P)$  for all $i\geq 2$.

First, let $H_{2n} (X)\cong \mathbb{Z}$. By the universal coefficient theorem for cohomology, we have $H^{2n} (X;\mathbb{Z}_2 ) \neq 0$. Then, since $P$ is a closed compact connected $2n$-dimensional topological manifold, $X$ and $P$ have the same homotopy type  by Theorem \ref{4}.

Second, let $H_{2n} (X)=0$. Then we just have  the following cases:

Case One. $H_i (X)=0$ for all $i\geq 1$. Then by the Whitehead Theorem, $X$ and  $\{ *\}$ have the same homotopy type.

Case Two. $H_n (X)\cong \mathbb{Z}$ and $H_i (X)=0$ for all $i\neq n$. Then $X$ is the Moore space $M(\mathbb{Z},n)$ and so, $X$ and  $\mathbb{S}^n$ have the same homotopy type.

Case Three. $H_n (X)\cong \mathbb{Z}\times \mathbb{Z}$ and $H_i (X)=0$ for all $i\neq n$. Then $X$ has the homotopy type of $\mathbb{S}^n \vee \mathbb{S}^n$. But $\mathbb{S}^n \vee \mathbb{S}^n$ is not homotopy dominated by $P=\mathbb{S}^n \times \mathbb{S}^n$ because $\pi_{2n-1} ( \mathbb{S}^n \vee \mathbb{S}^n )\cong \pi_{2n-1}(\mathbb{S}^n )\times \pi_{2n-1}(\mathbb{S}^n )\times \mathbb{Z}$ (by Theorem \ref{44}) is not isomorphic to a direct summand of $\pi_{2n-1}(\mathbb{S}^n \times \mathbb{S}^n)\cong \pi_{2n-1}(\mathbb{S}^n )\times \pi_{2n-1}(\mathbb{S}^n )$. Thus this case does not occur.
\end{proof}
\begin{lemma}\label{3}
Let $X$ be a simply connected CW-complex with $H_n (X)\cong H_{n+1}(X)\cong \mathbb{Z}$ and $H_i (X)=0$ for all $i\neq n,n+1$. Then $X$ has the homotopy type of $\mathbb{S}^n \vee \mathbb{S}^{n+1}$.
\end{lemma}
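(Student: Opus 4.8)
The plan is to invoke the characterization of wedges of Moore spaces in Theorem \ref{2}. Since $X$ is simply connected with $\tilde{H}_i(X)=0$ for $i<n$, the Hurewicz theorem (Theorem \ref{1}) forces $X$ to be $(n-1)$-connected; in particular $n\geq 2$ is automatic, for if $n=1$ then simple connectivity would give $H_1(X)=0$, contradicting $H_n(X)\cong\mathbb{Z}$. To conclude that $X$ splits as a one point union of Moore spaces, I would verify that the Hurewicz homomorphism $h_k^X:\pi_k(X)\to H_k(X)$ is split surjective for every $k>1$, and then match the resulting wedge to $\mathbb{S}^n\vee\mathbb{S}^{n+1}$ via its homology.

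Most degrees are immediate. For $1<k<n$ and for $k>n+1$ we have $H_k(X)=0$, so $h_k^X$ is the zero map onto the trivial group, which is trivially split surjective. For $k=n$, the Hurewicz theorem gives that $h_n^X:\pi_n(X)\to H_n(X)$ is an isomorphism, which is in particular split surjective. This leaves the single critical degree $k=n+1$.

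The crux is degree $n+1$. By the epimorphism part of the Hurewicz theorem, $h_{n+1}^X:\pi_{n+1}(X)\to H_{n+1}(X)$ is surjective, and $\pi_{n+1}(X)$ is abelian since $X$ is simply connected. Hence there is a short exact sequence of abelian groups
\[
0\to \ker h_{n+1}^X \to \pi_{n+1}(X)\xrightarrow{\,h_{n+1}^X\,} H_{n+1}(X)\to 0,
\]
and because $H_{n+1}(X)\cong\mathbb{Z}$ is free, hence projective, this sequence splits. Thus $h_{n+1}^X$ is split surjective as well.

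With all the Hurewicz maps $h_k^X$ split surjective, Theorem \ref{2} yields that $X$ is homotopy equivalent to a one point union of Moore spaces. Since the homology of such a wedge is the direct sum of the homologies of its summands, and $H_\ast(X)$ is $\mathbb{Z}$ concentrated in degrees $n$ and $n+1$, the wedge must be $M(\mathbb{Z},n)\vee M(\mathbb{Z},n+1)$. Finally, using that $M(\mathbb{Z},m)\simeq\mathbb{S}^m$ for $m\geq 2$ together with the uniqueness of the homotopy type of a Moore space (Theorem \ref{-2}), we obtain $X\simeq\mathbb{S}^n\vee\mathbb{S}^{n+1}$. The only point I would check carefully, beyond the formal splitting argument, is precisely this last identification: that matching the integral homology of $X$ against that of a wedge of Moore spaces pins the wedge down uniquely up to homotopy, which is where the freeness of the homology groups and Theorem \ref{-2} combine.
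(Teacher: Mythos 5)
Your proof is correct and follows essentially the same route as the paper: bootstrap $(n-1)$-connectivity via Hurewicz, show every Hurewicz map $h_k^X$ is split surjective (the degree $n+1$ case via projectivity of $\mathbb{Z}$), apply Theorem \ref{2} to get a wedge of Moore spaces, and match homology using Theorem \ref{-2}. The only difference is presentational: the paper splits into the cases $n=2$, $n=3$, $n>3$, whereas you handle all $n\geq 2$ uniformly, which is cleaner but not a different argument.
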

\begin{proof}
We consider three following cases:
\begin{enumerate}[(i).]
\item
Let $n=2$. Since $X$ is  simply connected (equivalently, 1-connected), then  $h_{2}^{X}:\pi_2 (X)\cong H_2 (X)$ and $h_{3}^{X}:\pi_3 (X)\longrightarrow H_3 (X)$ is an epimorphism by Theorem \ref{1}. Clearly, $h_{2}^{X}$ is a split surjective homomorphism. Also, since
\[
0\rightarrow Ker\: h_3 \hookrightarrow \pi_3 (X)\xrightarrow{h_{3}^{X}} H_3 (X) \rightarrow 0
\]
is a short exact sequence and $H_3 (X)\cong \mathbb{Z}$ is a projective $\mathbb{Z}$-module,  $h_{3}^{X}$ is also split surjective. On the other hand, since $H_i (X)=0$ for all $i\geq 4$, $h_{i}^{X}$ is also split surjective. Thus by Theorem \ref{2}, the space $X$ is homotopy equivalent to a wedge of Moore spaces. By hypothesis $H_2 (X)\cong H_3 (X)\cong \mathbb{Z}$, so  $X$ has the homotopy type of $\mathbb{S}^2 \vee \mathbb{S}^3$.
\item
Let $n=3$. Since $X$ is simply connected (equivalently, 1-connected), $\pi_2 (X)\cong H_2 (X)$ by Theorem \ref{1}.  By hypothesis, we have $H_2 (X)=0$. Hence $\pi_2 (X)=0$ and so, $X$ is a 2-connected space. Again by Theorem \ref{1}, $h_{3}^{X}:\pi_3 (X)\cong H_3 (X)$ and $h_{4}^{X}:\pi_4 (X)\longrightarrow H_4 (X)$ is an epimorphism. Now, similar to the previous argument, one can prove that $X$ has the homotopy type of $\mathbb{S}^3 \vee \mathbb{S}^{4}$.
\item
Let $n>3$. Similar to the argument mentioned in the previous case, one can easily see that $X$ is $(n-1)$-connected. By Theorem \ref{1},  $h_{n}^{X}:\pi_n (X)\cong H_n (X)$ and $h_{n+1}^{X}:\pi_{n+1}(X)\longrightarrow H_{n+1}(X)$ is an epimorphism. Now, similar to the argument mentioned in case (i), one can prove that $X$ has the homotopy type of $\mathbb{S}^n \vee \mathbb{S}^{n+1}$.
\end{enumerate}
\end{proof}
\begin{lemma}\label{102}
The capacity of  $\mathbb{S}^n \times \mathbb{S}^{n+1}$ is equal to $4$, for all $n\geq 2$.
\end{lemma}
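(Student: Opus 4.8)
The plan is to follow the pattern established in Lemma \ref{100} and Lemma \ref{101}. Write $P=\mathbb{S}^n \times \mathbb{S}^{n+1}$. By the Kunneth formula,
\[
H_i (P)\cong
\begin{cases}
\mathbb{Z}, & i=0,n,n+1,2n+1\\
0, & \text{otherwise.}
\end{cases}
\]
Since $n\geq 2$, the space $P$ is simply connected, so any $X$ homotopy dominated by $P$ is simply connected as well, and each $H_i (X)$ is a direct summand of $H_i (P)$ for $i\geq 2$. In particular $H_n (X),H_{n+1}(X),H_{2n+1}(X)\in\{0,\mathbb{Z}\}$ while all other reduced homology groups vanish, leaving only finitely many candidate homology profiles to analyse.

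First I would dispose of the top dimension. If $H_{2n+1}(X)\cong\mathbb{Z}$, then by the universal coefficient theorem $H^{2n+1}(X;\mathbb{Z}_2)\cong Hom(H_{2n+1}(X),\mathbb{Z}_2)\cong\mathbb{Z}_2\neq 0$. As $P$ is a closed connected $(2n+1)$-dimensional topological manifold, Theorem \ref{4} forces $X$ to have the homotopy type of $P$. This accounts for exactly one homotopy type.

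It then remains to treat $H_{2n+1}(X)=0$, which splits into four subcases according to the pair $(H_n (X),H_{n+1}(X))$. If both vanish, the Whitehead theorem gives that $X$ has the homotopy type of $\{*\}$; if only $H_n (X)\cong\mathbb{Z}$, then $X$ is the Moore space $M(\mathbb{Z},n)$ and so has the homotopy type of $\mathbb{S}^n$; if only $H_{n+1}(X)\cong\mathbb{Z}$, then $X$ has the homotopy type of $\mathbb{S}^{n+1}$ (using Theorem \ref{-2}). The remaining subcase, $H_n (X)\cong H_{n+1}(X)\cong\mathbb{Z}$ with all other homology zero, is exactly the hypothesis of Lemma \ref{3}, so such an $X$ would have to be homotopy equivalent to $\mathbb{S}^n \vee \mathbb{S}^{n+1}$. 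Together with $P$ and $\{*\}$ this would give five candidate types, so in order to obtain capacity $4$ the crux is to show that $\mathbb{S}^n \vee \mathbb{S}^{n+1}$ is \emph{not} homotopy dominated by $P$.

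The main obstacle is precisely this last exclusion, which I would resolve exactly as in Lemma \ref{101} by means of Theorem \ref{44}. With $p=n$ and $q=n+1$ we have $p+q-1=2n$ and $\min\{p,q\}=n$, so the admissible range $2\leq r\leq p+q+\min\{p,q\}-3=3n-2$ of Theorem \ref{44} contains $r=2n$ precisely because $n\geq 2$ (this is where the hypothesis is used). Hence
\[
\pi_{2n}(\mathbb{S}^n \vee \mathbb{S}^{n+1})\cong \pi_{2n}(\mathbb{S}^n)\oplus\pi_{2n}(\mathbb{S}^{n+1})\oplus\pi_{2n}(\mathbb{S}^{2n}),
\]
and since $\pi_{2n}(\mathbb{S}^{2n})\cong\mathbb{Z}$ this group carries one extra infinite cyclic summand compared with $\pi_{2n}(\mathbb{S}^n \times \mathbb{S}^{n+1})\cong\pi_{2n}(\mathbb{S}^n)\oplus\pi_{2n}(\mathbb{S}^{n+1})$. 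Because a homotopy domination makes $\pi_{2n}(X)$ a direct summand of $\pi_{2n}(P)$, and these homotopy groups of spheres are finitely generated, a rank comparison shows $\pi_{2n}(\mathbb{S}^n \vee \mathbb{S}^{n+1})$ cannot be a direct summand of $\pi_{2n}(P)$. Thus the fourth subcase does not occur, and we are left with exactly the four homotopy types $\{*\}$, $\mathbb{S}^n$, $\mathbb{S}^{n+1}$, and $\mathbb{S}^n \times \mathbb{S}^{n+1}$; that is, $C(P)=4$.
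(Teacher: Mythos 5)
Your proposal is correct and takes essentially the same route as the paper: the same K\"{u}nneth computation, the same use of Theorem \ref{4} to handle $H_{2n+1}(X)\cong\mathbb{Z}$, the same four subcases with Lemma \ref{3} identifying the problematic candidate as $\mathbb{S}^n\vee\mathbb{S}^{n+1}$, and the same exclusion of that candidate by comparing $\pi_{2n}$ of the wedge and the product. If anything, you supply detail the paper leaves implicit --- verifying via Theorem \ref{44} that $r=2n$ lies in the range $2\leq r\leq 3n-2$ exactly when $n\geq 2$, and making the rank argument for the extra $\pi_{2n}(\mathbb{S}^{2n})\cong\mathbb{Z}$ summand explicit.
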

\begin{proof}
Put $P=\mathbb{S}^n \times \mathbb{S}^{n+1}$. By the Kunneth formula, we have
\[
H_i (P)\cong
\begin{cases}
\mathbb{Z}, &   i=0,n,n+1,2n+1,\\
0, & \text{otherwise}.
\end{cases}
\]
Suppose that $X$ is homotopy dominated by $P$. Then  $H_{i}(X)$ is isomorphic to a direct suumand of $H_i (P)$  for all $i\geq 2$.

First, let $H_{2n+1}(X)\cong \mathbb{Z}$. By the universal coefficient theorem for cohomology, we have $H^{2n+1} (X;\mathbb{Z}_2 ) \neq 0$. Then, since $P$ is a closed compact connected $(2n+1)$-dimensional topological manifold, $X$ and $P$ have the same homotopy type by Theorem \ref{4}.

Second, let $H_{2n+1}(X)=0$. We just have  the following cases:

Case One. $H_i (X)=0$ for all $i$. Then by the Whitehead Theorem, $X$ and $\{ *\}$ have the same homotopy type.

Case Two. $H_n (X)\cong \mathbb{Z}$ and $H_i (X)=0$ for all $i\neq n$. Then $X$ is the Moore space $M(\mathbb{Z},n)$ and so,  $X$ and  $\mathbb{S}^n$ have the same homotopy type.

Case Three. $H_{n+1} (X)\cong \mathbb{Z}$ and $H_i (X)=0$ for all $i\neq 3$. Then $X$ is the Moore space $M(\mathbb{Z},n+1)$ and so,  $X$ and  $\mathbb{S}^{n+1}$ have the same homotopy type.

Case Four.  $H_n (X)\cong H_{n+1} (X)\cong \mathbb{Z}$ and $H_i (X)=0$ for all $i\neq 2,3$. Then by Lemma \ref{3}, $X$ and  $\mathbb{S}^n \vee \mathbb{S}^{n+1}$ have the same homotopy type. But $\mathbb{S}^n \vee \mathbb{S}^{n+1}$ is not homotopy dominated by $\mathbb{S}^n \times \mathbb{S}^{n+1}$ because $\pi_{2n} (\mathbb{S}^n \vee \mathbb{S}^{n+1} )$ is not isomorphic to a subgroup of $\pi_{2n} (\mathbb{S}^{n} \times \mathbb{S}^{n+1} )$. Thus this case does not happen.
\end{proof}
\begin{lemma}\label{103}
The capacity of $\mathbb{S}^n \times \mathbb{S}^m$ is equal to $4$, where $2<n+1<m$.
\end{lemma}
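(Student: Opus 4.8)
The plan is to follow the template of Lemmas \ref{101} and \ref{102}. Put $P=\mathbb{S}^n\times\mathbb{S}^m$. By the K\"unneth formula $H_i(P)\cong\mathbb{Z}$ for $i=0,n,m,n+m$ and $H_i(P)=0$ otherwise, while $\pi_1(P)=1$. If $X$ is homotopy dominated by $P$, then $\pi_1(X)$ is a retract of $\pi_1(P)$, hence trivial, and each $H_i(X)$ is a direct summand of $H_i(P)$; in particular every $H_i(X)$ is $0$ or $\mathbb{Z}$ and vanishes outside $\{0,n,m,n+m\}$. I would first dispose of the case $H_{n+m}(X)\cong\mathbb{Z}$ exactly as before: the universal coefficient theorem gives $H^{n+m}(X;\mathbb{Z}_2)\neq 0$, and since $P$ is a closed connected $(n+m)$-dimensional manifold, Theorem \ref{4} yields $X\simeq P$.

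Assume now $H_{n+m}(X)=0$. Splitting on the two remaining groups $H_n(X),H_m(X)\in\{0,\mathbb{Z}\}$ leaves four cases. Three are immediate: $(0,0)$ gives $X\simeq\{*\}$ by the Whitehead theorem, $(\mathbb{Z},0)$ gives the Moore space $M(\mathbb{Z},n)\simeq\mathbb{S}^n$, and $(0,\mathbb{Z})$ gives $M(\mathbb{Z},m)\simeq\mathbb{S}^m$ (using Theorem \ref{-2}). Together with $P$ these furnish the four pairwise distinct homotopy types $\{*\},\mathbb{S}^n,\mathbb{S}^m,P$, so the entire content of the lemma is to show that the fourth case $H_n(X)\cong H_m(X)\cong\mathbb{Z}$ cannot occur.

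Here I expect the main obstacle, and it is precisely where the present situation differs from Lemma \ref{102}: since $m>n+1$, a space with this homology need not split as $\mathbb{S}^n\vee\mathbb{S}^m$ (its top cell may be attached by a nontrivial element of $\pi_{m-1}(\mathbb{S}^n)$), so Lemma \ref{3} is unavailable. The key observation is that the domination hypothesis itself forces the splitting. Writing the domination as $d\circ u\simeq\mathrm{id}_X$ with $u:X\to P$ and $d:P\to X$, the map $d$ is surjective on $\pi_m$ and on $H_m$, while $h_m^P:\pi_m(P)\to H_m(P)$ is surjective because the $\mathbb{S}^m$-factor carries the fundamental class. Naturality of the Hurewicz map then forces $h_m^X:\pi_m(X)\to H_m(X)\cong\mathbb{Z}$ to be surjective, hence split surjective as $\mathbb{Z}$ is projective. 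Since $X$ is $(n-1)$-connected with $H_i(X)=0$ for $i\neq n,m$, the remaining Hurewicz maps are split surjective for trivial reasons (an isomorphism in degree $n$ by Theorem \ref{1}, and a map onto the zero group in all other degrees), so Theorem \ref{2} gives $X\simeq M(\mathbb{Z},n)\vee M(\mathbb{Z},m)=\mathbb{S}^n\vee\mathbb{S}^m$.

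It then remains to contradict domination for the wedge, which I would do with the Hilton decomposition as in Lemma \ref{102}, but now at the degree $r=n+m-1$. Since $n\geq 2$ one has $n+m-1\leq 2n+m-3$, so Theorem \ref{44} applies and gives
\[
\pi_{n+m-1}(\mathbb{S}^n\vee\mathbb{S}^m)\cong\pi_{n+m-1}(\mathbb{S}^n)\oplus\pi_{n+m-1}(\mathbb{S}^m)\oplus\pi_{n+m-1}(\mathbb{S}^{n+m-1}),
\]
whose last summand is $\mathbb{Z}$. On the other hand $\pi_{n+m-1}(P)\cong\pi_{n+m-1}(\mathbb{S}^n)\oplus\pi_{n+m-1}(\mathbb{S}^m)$, and all of these groups are finitely generated. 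Domination makes $\pi_{n+m-1}(X)$ a direct summand, hence a subgroup, of $\pi_{n+m-1}(P)$; but the extra free summand makes its rank exactly one larger than that of $\pi_{n+m-1}(P)$, and a subgroup of a finitely generated abelian group cannot have strictly larger rank. This contradiction rules out the fourth case and shows $C(P)=4$.
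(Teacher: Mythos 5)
Your proof is correct, but in the crucial fourth case it takes a genuinely different route from the paper. The paper handles $H_n(X)\cong H_m(X)\cong\mathbb{Z}$ by exploiting the cell structure of $P=\mathbb{S}^n\times\mathbb{S}^m$: since $2<n+1<m$, the subcomplex inclusion $i:\mathbb{S}^n\vee\mathbb{S}^m\hookrightarrow P$ induces isomorphisms on $H_n$ and $H_m$ (the cellular chain groups coincide with homology in those degrees), so the composite $d_X\circ i:\mathbb{S}^n\vee\mathbb{S}^m\to X$ (written with a harmless typo as $i\circ d_X$ in the paper) is a homology isomorphism between simply connected complexes, hence a homotopy equivalence by the Whitehead theorem. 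You instead recover the splitting $X\simeq\mathbb{S}^n\vee\mathbb{S}^m$ intrinsically, via Baues' criterion (Theorem \ref{2}): you correctly observe that Lemma \ref{3} is unavailable when $m>n+1$ (homology alone does not force a wedge decomposition, as $\mathbb{CP}^2$ already shows for $n=2$, $m=4$), and you extract the needed split surjectivity of $h_m^X$ from the domination itself, using naturality of the Hurewicz map along $d$ together with surjectivity of $h_m^P$ on the fundamental class of the $\mathbb{S}^m$-factor and projectivity of $\mathbb{Z}$. Both arguments use the domination in an essential way; the paper's is more elementary and geometric, while yours is more robust (it would survive situations where no convenient subcomplex model of the wedge sits inside the dominating space). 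The concluding contradiction is the same Hilton computation at $r=n+m-1$ (your range check $n+m-1\leq 2n+m-3$ for $n\geq 2$ is right), and your rank count --- the extra $\mathbb{Z}=\pi_{n+m-1}(\mathbb{S}^{n+m-1})$ summand raises the rank by one, while a subgroup of a finitely generated abelian group cannot have strictly larger rank --- actually makes precise the paper's bare assertion that $\pi_{n+m-1}(\mathbb{S}^n\vee\mathbb{S}^m)$ is not isomorphic to a subgroup of $\pi_{n+m-1}(\mathbb{S}^n\times\mathbb{S}^m)$, which is a small improvement in rigor.
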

\begin{proof}
Put $P=\mathbb{S}^n \times \mathbb{S}^{m}$. By the Kunneth formula, we know that
\[
H_i (P)\cong
\begin{cases}
\mathbb{Z}, &  i=n,m,n+m,\\
0, & \text{otherwise}.
\end{cases}
\]
 Then  $H_{i}(X)$ is isomorphic to a direct suumand of $H_i (P)$  for all $i\geq 2$.

 First, let $H_{n+m}(X)\cong \mathbb{Z}$. By the universal coefficient theorem for cohomology, $H^{n+m} (X;\mathbb{Z}_2 ) \neq 0$. Then, since $P$ is a closed compact connected $(n+m)$-dimensional topological manifold, $X$ and $P$ have the same homotopy type by Theorem \ref{4}.

Second, let $H_{n+m}(X)=0$. We have  the following cases:

Case One. $H_i (X)=0$ for all $i$; Then by the Whitehead Theorem, $X$ and  $\{ *\}$ have the same homotopy type.

Case Two.  $H_n (X)\cong \mathbb{Z}$ and $H_i (X)=0$ for all $i\neq n$; Then $X$ is the Moore space $M(\mathbb{Z},n)$, and so  $X$ and $\mathbb{S}^n$ have the same homotopy type.

Case Three. $H_m (X)\cong \mathbb{Z}$ and $H_i (X)=0$ for all $i\neq m$; Then $X$ is the Moore space $M(\mathbb{Z},m)$, and so $X$ and $\mathbb{S}^m$ have the same homotopy type.

Case Four. $H_n (X)\cong H_m (X)\cong \mathbb{Z}$ and $H_i (X)=0$ for all $i\neq n,m$. We know that $\mathbb{S}^n$ and $\mathbb{S}^m$ have  CW decompositions $\{ a ,e^n \}$ and $\{ b,  e^m \}$ respectively, where $a$ and $b$ are 0-cells. Hence $P=\mathbb{S}^n \times \mathbb{S}^m$ has a CW decomposition $\{ a\times b, a\times e^m , e^n \times b, e^n \times e^m \}$. One can consider $\mathbb{S}^n \vee \mathbb{S}^m$ as the subspace $\mathbb{S}^n \times \{ b\} \cup \{ a\} \times \mathbb{S}^m$ of $P=\mathbb{S}^n \times \mathbb{S}^m$. Then $\mathbb{S}^n \vee \mathbb{S}^m$ has a CW decomposition $\{ a\times b, a\times e^m , e^n \times b\}$. Since $2<n+1<m$, then $H_n (P)=H_n (X)=C_n (X)$ and $H_m (P)=H_m (X)=C_m (X)$. Hence the homomorphism $i_* :H_i (\mathbb{S}^n \vee \mathbb{S}^m )\longrightarrow H_i (P)$ is identity for $i=n,m$, where $i:\mathbb{S}^n \vee \mathbb{S}^m \hookrightarrow P$ is the inclusion map. Now, consider the map
\[
h=i\circ d_X :\mathbb{S}^n \vee \mathbb{S}^m \longrightarrow X,
\]
 where $d_X :P\longrightarrow X$ is the domination map. It is easy to see that $(d_X)_* :H_i (P)\longrightarrow H_n (X)$ is an isomorphism for $i=n,m$ (epimorphism between two isomorphic Hopfian groups). Hence, the map $h_* :H_i (\mathbb{S}^n \vee \mathbb{S}^m )\longrightarrow H_i (X)$ is an isomorphism for all $i\geq 2$ and so by the Whitehead Theorem, $h:\mathbb{S}^n \vee \mathbb{S}^m \longrightarrow X$ is a homotopy equivalence. This shows that $\mathbb{S}^n \vee \mathbb{S}^m$ is  homotopy dominated by $P$ which is a contradiction because by Theorem \ref{44}, $\pi_{n+m-1} (\mathbb{S}^n \vee \mathbb{S}^m )$ is not isomorphic to a subgroup of $\pi_{n+m-1} (\mathbb{S}^{n} \times \mathbb{S}^m )$. Thus this case does not happen.
\end{proof}
Finally by Lemmas \ref{100}, \ref{101}, \ref{102} and \ref{103}, we can conclude the following theorem which is the main result of this section.
\begin{theorem}\label{P}
For $n,m\geq 1$, the capacity of $\mathbb{S}^n \times \mathbb{S}^m$ is equal to $4$ if $n\neq m$ and it is equal to $3$ if $n=m$.
\end{theorem}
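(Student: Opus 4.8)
The plan is to deduce this summary statement directly from the four preceding lemmas by a finite case analysis, exploiting the homeomorphism $\mathbb{S}^n \times \mathbb{S}^m \cong \mathbb{S}^m \times \mathbb{S}^n$ to reduce to the situation $n \leq m$. Since the capacity is a homotopy invariant, it is enough to compute $C(\mathbb{S}^n \times \mathbb{S}^m)$ for ordered pairs with $n \leq m$; the value for the reversed pair is then automatic. Thus the whole argument amounts to checking that every pair $(n,m)$ with $1 \leq n \leq m$ is governed by exactly one of Lemmas \ref{100}, \ref{101}, \ref{102}, \ref{103}, and that the capacities they report agree with the two advertised values $3$ and $4$.

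First I would dispose of the equal-dimensional case. When $n=m$ (including the base case $n=m=1$, which is handled separately at the start of Lemma \ref{101} via the Eilenberg--MacLane product $\mathbb{S}^1 \times \mathbb{S}^1$), Lemma \ref{101} gives $C(\mathbb{S}^n \times \mathbb{S}^n)=3$, which is precisely the value claimed for $n=m$.

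Next, assuming $n<m$, I would split into three subcases according to the gap between the two dimensions. If $n=1$, then $m\geq 2$ and Lemma \ref{100} yields $C(\mathbb{S}^1 \times \mathbb{S}^m)=4$. If instead $n\geq 2$ and $m=n+1$, then Lemma \ref{102} gives $C(\mathbb{S}^n \times \mathbb{S}^{n+1})=4$. Finally, if $n\geq 2$ and $m\geq n+2$, then the chain of inequalities $2<n+1<m$ holds, so Lemma \ref{103} applies and again produces capacity $4$. In all three subcases the value is $4$, matching the claim for $n\neq m$.

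The only point requiring genuine care is the bookkeeping: I must confirm that these subcases are mutually exclusive and jointly exhaustive, namely that each pair $(n,m)$ with $1\leq n\leq m$ lands in exactly one of them, and in particular that the theorem's uniform hypothesis $n,m\geq 1$ is fully absorbed by the more restrictive hypotheses of the individual lemmas after the reduction to $n\leq m$. I do not expect a real obstacle here, since all the substantive homotopy-theoretic content—the appeals to Theorem \ref{4}, the Whitehead and Hurewicz theorems, the Moore-space recognition in Lemma \ref{3}, and the wedge-versus-product computations ruling out the extra summands via Theorem \ref{44}—has already been carried out inside the lemmas; the proof of Theorem \ref{P} itself is then purely organizational.
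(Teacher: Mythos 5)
Your proposal is correct and matches the paper exactly: the paper derives Theorem \ref{P} with no further argument beyond citing Lemmas \ref{100}--\ref{103}, and your case bookkeeping (reducing to $n\leq m$ by symmetry, then $n=m$, $n=1<m$, $2\leq n=m-1$, and $2\leq n\leq m-2$, the last matching the hypothesis $2<n+1<m$ of Lemma \ref{103}) is exhaustive and mutually exclusive. If anything, you are slightly more careful than the paper, which leaves this purely organizational verification implicit.
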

\section{The Capacity of Lens spaces}
In this section, we use reference \cite{Hat1} for expressing  the classification of closed (compact and without bounday) orientable 3-manifolds.

By Kneser's Theorem, every compact orientable 3-manifold $M$ factors as a connected sum of primes, $M = P_1 \# \cdots \# P_n$, and this decomposition is unique up to insertion or deletion of $\mathbb{S}^3$ summands. On the other hand, primes 3-manifold that are closed and orientable can be lumped broadly into three classes:
\begin{enumerate}[\textbf{Type} I:]
\item
\textbf{infinite cyclic fundamental group.} Only one such manifold (which is closed and orientable) is $\mathbb{S}^1 \times \mathbb{S}^2$. The capacity of such manifold is 4 by Theorem \ref{P}.
\item
\textbf{infinite noncyclic fundamental group.}  Such a manifold M is an Eilenberg-MacLane space $K(\pi , 1)$. It has been shown that $\pi$ is a finitely generated torsion-free group. By \cite[Proposition 4.4]{Moh}, the capacity of an Eilenberg-MacLane space $K(\pi ,1)$ is equal to the number of r-images of $\pi$ up to isomorphism. Recall that a group homomorphism $f :G \longrightarrow H$ is an r-homomorphism if there
exists a homomorphism $g :H\longrightarrow G$ such that $f\circ g = id_H$. Then $H$ is an r-image of $G$. In particular, if $\pi$ is a nilpotent group, then $\pi$ has only finitely many r-images up to isomorphism by \cite[Corollary 1]{1}.
\item
\textbf{finite fundamental group.} For such a manifold $M$, the universal covering space $\tilde{M}$ is
simply connected and closed, hence a homotopy sphere. All these manifolds (which
are spherical 3-manifolds) have the form $M =\mathbb{S}^3 /\Gamma$ for $\Gamma$ a finite subgroup of $SO(4)$ acting freely on $\mathbb{S}^3$ by rotations. Thus $\mathbb{S}^3$ is the universal covering space of $M$ and $\Gamma =\pi_1 (M)$. The spherical manifolds with cyclic fundamental group $\pi_1 (M)=\Gamma$ are the lens spaces which are defined as follows:

Let $p$ and $q$ be relatively prime integers. Regard $\mathbb{S}^3$ as all $(z_0 ,z_1 )\in \mathbb{C}^2$ with $|z_0 |^2 + |z_1 |^2 = 1$. Let $\zeta =e^{2\pi i/p}$ be a primitive $p$th root of
unity; define $h: \mathbb{S}^3 \longrightarrow \mathbb{S}^3$ by $h(z_0 ,z_1 )=(\zeta z_0 ,\zeta^q z_1 )$,  and define an equivalence relation on $\mathbb{S}^3$ by $(z_0 ,z_1 )\sim (z'_0 ,z'_1 )$ if there exists an  integer $m$ with $h^m (z_0, z_1 ) = (z'_0 ,z'_1 )$. The quotient space $\mathbb{S}^3 /\sim$ is called a lens space and is denoted by $L(p,q)$. Note that $\pi_1 (L(p,q))\cong \mathbb{Z}_p$. In the next theorem, we compute the capacity of lens spaces.
\end{enumerate}
\begin{theorem}
The capacity of the lens space $L(p,q)$ is equal to $2$, where $p$ and $q$ are relatively prime integers.
\end{theorem}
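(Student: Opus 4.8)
The plan is to follow the template of Lemmas \ref{100}--\ref{103}: I analyze an arbitrary space $X$ homotopy dominated by $P=L(p,q)$ through its homology and its universal cover, and show that the only homotopy types that can arise are that of a point $\{*\}$ and that of $P$ itself. Recall that $H_0(P)\cong H_3(P)\cong\mathbb{Z}$, $H_1(P)\cong\mathbb{Z}_p$, $H_2(P)=0$, and $\tilde{P}=\mathbb{S}^3$. Writing $d_X\colon P\to X$ and $u_X\colon X\to P$ for the domination data, the relations $d_{X*}u_{X*}=\mathrm{id}$ show that $\pi_1(X)$ is an r-image of $\pi_1(P)\cong\mathbb{Z}_p$, hence $\pi_1(X)\cong\mathbb{Z}_d$ for some divisor $d$ of $p$, and that $H_3(X)$ is a direct summand of $H_3(P)\cong\mathbb{Z}$, hence $H_3(X)\cong 0$ or $\mathbb{Z}$. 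I then split on the value of $H_3(X)$, exactly as in the earlier lemmas.

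First, if $H_3(X)\cong\mathbb{Z}$, then by the universal coefficient theorem $H^3(X;\mathbb{Z}_2)\cong\mathrm{Hom}(H_3(X),\mathbb{Z}_2)\cong\mathbb{Z}_2\neq 0$; since $P$ is a closed orientable (so $H^3(P;\mathbb{Z}_2)\neq 0$) connected $3$-manifold, Theorem \ref{4} yields $X\simeq P=L(p,q)$.

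Second, suppose $H_3(X)=0$. Since the domination maps are $\pi_1$-compatible they lift to a domination of $\tilde{X}$ by $\tilde{P}=\mathbb{S}^3$, and because $C(\mathbb{S}^3)=2$ we get $\tilde{X}\simeq\{*\}$ or $\tilde{X}\simeq\mathbb{S}^3$. I claim the second alternative is incompatible with $H_3(X)=0$. Indeed, lifting $d_X$ to a map $\tilde{d}_X\colon\mathbb{S}^3=\tilde{P}\to\tilde{X}$ equivariant with respect to $d_{X*}\colon\pi_1(P)\to\pi_1(X)$, the relation $\tilde{d}_{X*}\tilde{u}_{X*}=\mathrm{id}$ on $H_3(\tilde{X})\cong\mathbb{Z}$ shows $\tilde{d}_{X*}$ is of degree $\pm 1$; combined with the fact that the deck action of $\pi_1(P)$ on $H_3(\tilde{P})$ is trivial (the lens-space covering transformations are rotations), equivariance forces the deck action of $\pi_1(X)\cong\mathbb{Z}_d$ on $H_3(\tilde{X})\cong\mathbb{Z}$ to be trivial, and the Cartan--Leray computation then gives $H_3(X)\cong\mathbb{Z}\neq 0$, a contradiction. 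Hence $\tilde{X}\simeq\{*\}$, so $X$ is an Eilenberg--MacLane space $K(\mathbb{Z}_d,1)$. By Lemma \ref{K} we have $H_3(X)\cong\mathbb{Z}_d$ (as $3$ is odd), and since $H_3(X)=0$ this forces $d=1$; therefore $X\simeq\{*\}$.

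Combining the two cases, every space homotopy dominated by $L(p,q)$ has the homotopy type of $\{*\}$ or of $L(p,q)$, and both of these are dominated by $L(p,q)$ (the point trivially, and $L(p,q)$ by the identity), so $C(L(p,q))=2$. I expect the only genuine difficulty to be the claim inside the second case that $\tilde{X}\simeq\mathbb{S}^3$ cannot coexist with $H_3(X)=0$, i.e.\ that the induced deck action on the top homology of the universal cover is orientation-preserving; the equivariant degree argument sketched above is where the rotational nature of the lens-space action really enters, and it is the step I would write out most carefully.
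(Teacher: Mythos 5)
Your proof is correct, and while it draws on the same toolkit as the paper --- Theorem \ref{4} for the case of nonvanishing top homology, domination of $\tilde{X}$ by $\tilde{P}=\mathbb{S}^3$ together with $C(\mathbb{S}^3)=2$, and Lemma \ref{K} to dispose of the aspherical alternative --- it is organized genuinely differently, and in one respect it is more complete than the paper's argument. The paper splits on $\pi_1(X)$, listing only the cases $\pi_1(X)=1$ and $\pi_1(X)\cong\mathbb{Z}_p$, and when $\pi_1(X)\cong\mathbb{Z}_p$ and $\tilde{X}\simeq\mathbb{S}^3$ it concludes $X\simeq P$ by showing $d_X$ induces isomorphisms (epimorphisms between isomorphic Hopfian groups) and invoking the Whitehead theorem. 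You instead split on $H_3(X)\in\{0,\mathbb{Z}\}$, so the case $H_3(X)\cong\mathbb{Z}$ is finished at once by Theorem \ref{4} with no hypothesis on $\pi_1(X)$, and the Hopfian/Whitehead step is never needed. This buys you two things: first, your analysis covers all r-images $\mathbb{Z}_d$ of $\mathbb{Z}_p$, whereas the paper's case list silently omits the proper nontrivial direct summands $\mathbb{Z}_d$ with $1<d<p$ that exist whenever $p$ has at least two distinct prime factors (e.g.\ $p=6$, where $\mathbb{Z}_6\cong\mathbb{Z}_2\oplus\mathbb{Z}_3$); your route closes that gap. Second, the price is the extra step ruling out $\tilde{X}\simeq\mathbb{S}^3$ when $H_3(X)=0$, and your equivariant-degree plus Cartan--Leray argument for it is sound, with one cosmetic caveat: the composite $\tilde{d}_X\tilde{u}_X$ of the lifts is only homotopic to a deck transformation of $\tilde{X}$, not to the identity, so on $H_3(\tilde{X})\cong\mathbb{Z}$ it equals $\pm\mathrm{id}$ rather than $\mathrm{id}$ --- but degree $\pm 1$ is all you use, and the rest (surjectivity of $(d_X)_*$ on $\pi_1$, triviality of the deck action on $H_3(\tilde{X})$ forced by the orientation-preserving lens action upstairs, hence $\mathbb{Z}\cong E_\infty^{0,3}\hookrightarrow H_3(X)$) goes through, since $H_4(\mathbb{Z}_d;\mathbb{Z})=0$ kills the only possible differential. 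One further trifle, shared with the paper: in the universal coefficient computation of $H^3(X;\mathbb{Z}_2)$ you drop the $\mathrm{Ext}$-term; this is harmless because the $\mathrm{Hom}$-summand alone is nonzero (and in fact $H_2(X)=0$ here, being a summand of $H_2(P)=0$).
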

\begin{proof}
Suppose $X$ is homoyopy dominated by $P=L(p,q)$ with the domination map $d_X :P\longrightarrow X$ and $\tilde{X}$  denotes the universal covering space of $X$. Then $\pi_1 (X)$ and $H_{i}(X)$ are isomorphic to a direct suumand of $\pi_1 (P)\cong \mathbb{Z}$ and of $H_i (P)$, respectively, for all $i\geq 2$. We know that
\[
H_i (P)\cong
\begin{cases}
\mathbb{Z}, & i=0,3 \\
\pi_1 (P)\cong \mathbb{Z}_p , & i=1\\
0, & \text{otherwise}.
\end{cases}
\]
 We have the following cases:

Case One. $\pi_1 (X)=1$ and $H_i (X)=0$ for all $i\geq 2$.  Then by the Whitehead Theorem $X$ and $\{ *\}$ have the same homotopy type.

Case Two. $\pi_1 (X)=1$ and  $H_3 (X)\cong \mathbb{Z}$ and $H_i (X)=0$ for all $i\neq 3$. Then $H^3 (X;\mathbb{Z}_2)\neq 0$ and so, $X$ and $L(p,q)$ have the same homotopy type (by Theorem \ref{4}) which is a contradiction because $\pi_1 (X)=1$. Thus this case does not occur.

Case Three. $\pi_1 (X)\cong \mathbb{Z}_p$. Then $(d_X )_* :\pi_1 (P)\longrightarrow \pi_1 (X)$ is an  isomorphism.  Since $\tilde{X}$ is homotopy dominated by $\tilde{P}=\mathbb{S}^n$ and $C(\mathbb{S}^3 )=2$, then  $\tilde{X}$ has the homotopy type of $\{ *\}$ or $\mathbb{S}^3$. If $\tilde{X}$ has the homotopy type of $\{ *\}$, then $X$ is the Eilenberg-MacLane space $K(\mathbb{Z}_p ,1)$. By Lemma \ref{K}, $H_i (K(\mathbb{Z}_p ,1))$  is nonzero for infinitely many values of $i$ which is a contradiction with $H_i (X)\cong \leqslant H_i (P)$ for all $i\geq 0$. So, $\tilde{X}$ has the homotopy type of $\mathbb{S}^3$. Also, the homomorphism  $(d_X )_* :H_i (P)\longrightarrow H_i (X)$ which is induced by the domination map $d_X$ is an epimorphism between two isomorphic Hopfian groups and so, is an isomorphism for all $i\geq 2$. Now by the Whitehead Theorem, we obtain that $d_X :P\longrightarrow X$ is a homotopy equivalence. Hence $X$ has the homotopy type of $P$.
\end{proof}

By a similar argument to the previous theorem, we can compute the capacity of a real projective $n$-space. Note that this result was proved by Y. Kodama et al. in \cite{Kod} in a different manner.
\begin{theorem}
The capacity of $\mathbb{RP}^n$ is equal to $2$, for all $n\geq 1$
\end{theorem}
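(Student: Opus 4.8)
The plan is to follow the pattern of the lens space argument, using the two facts that $\pi_1(\mathbb{RP}^n)\cong\mathbb{Z}_2$ and that the universal cover of $\mathbb{RP}^n$ is $\mathbb{S}^n$, together with Borsuk's value $C(\mathbb{S}^n)=2$. The case $n=1$ is immediate, since $\mathbb{RP}^1=\mathbb{S}^1$ and $C(\mathbb{S}^1)=2$, so I would assume $n\geq 2$ and write $P=\mathbb{RP}^n$. Let $X$ be homotopy dominated by $P$, with maps $d_X\colon P\to X$ and $u_X\colon X\to P$ satisfying $d_X\circ u_X\simeq 1_X$. Since $\pi_1(X)$ is a retract of $\pi_1(P)\cong\mathbb{Z}_2$, it is either trivial or $\mathbb{Z}_2$, and I would treat these two cases separately, recording also that $H_i(X)$ is a direct summand of $H_i(P)$ for every $i$, so in particular $H_i(X)=0$ for $i>n$.

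In the case $\pi_1(X)\cong\mathbb{Z}_2$, the universal cover $\tilde X$ is homotopy dominated by $\tilde P=\mathbb{S}^n$, so $C(\mathbb{S}^n)=2$ leaves only $\tilde X\simeq\{*\}$ or $\tilde X\simeq\mathbb{S}^n$. The first is impossible: it would make $X$ a $K(\mathbb{Z}_2,1)$, whose homology is nonzero in infinitely many degrees by Lemma \ref{K} (with $m=2$), contradicting $H_i(X)=0$ for $i>n$. Hence $\tilde X\simeq\mathbb{S}^n$. The domination map then induces an isomorphism $\pi_1(P)\to\pi_1(X)$, being a split epimorphism $\mathbb{Z}_2\to\mathbb{Z}_2$, and, after passing to universal covers, an isomorphism on homology, since a split epimorphism $\mathbb{Z}\to\mathbb{Z}$ on $H_n$ is an isomorphism. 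By the Whitehead theorem $d_X$ is then a homotopy equivalence, so $X\simeq\mathbb{RP}^n$.

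In the case $\pi_1(X)=1$, the decisive observation is that $X$ is then homotopy dominated not merely by $\mathbb{RP}^n$ but by $\mathbb{S}^n$. Indeed, since $X$ is simply connected the map $u_X$ lifts through the covering $q\colon\mathbb{S}^n\to\mathbb{RP}^n$ to $\tilde u_X\colon X\to\mathbb{S}^n$ with $q\circ\tilde u_X=u_X$, and then $(d_X\circ q)\circ\tilde u_X=d_X\circ u_X\simeq 1_X$ exhibits $X$ as dominated by $\mathbb{S}^n$. Applying $C(\mathbb{S}^n)=2$ again, $X\simeq\{*\}$ or $X\simeq\mathbb{S}^n$. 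This single step is what rules out, wholesale, the many a priori homotopy types (such as $\mathbb{Z}_2$-Moore spaces built from the torsion in $H_*(\mathbb{RP}^n)$) that a direct homology enumeration would otherwise force me to examine one by one.

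It remains to eliminate $X\simeq\mathbb{S}^n$, and this is the one point requiring care. If $\mathbb{S}^n$ were homotopy dominated by $\mathbb{RP}^n$, then, since $\mathbb{RP}^n$ is a closed connected topological $n$-manifold and $H^n(\mathbb{S}^n;\mathbb{Z}_2)\cong\mathbb{Z}_2\neq 0$, Theorem \ref{4} would give $\mathbb{S}^n\simeq\mathbb{RP}^n$, which is absurd as their fundamental groups differ. Therefore $\mathbb{S}^n$ is not dominated by $\mathbb{RP}^n$, and the simply connected case yields only $X\simeq\{*\}$. Combining the two cases, the homotopy types dominated by $\mathbb{RP}^n$ are exactly $\{*\}$ and $\mathbb{RP}^n$, so $C(\mathbb{RP}^n)=2$. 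I expect the main obstacle to be precisely this non-domination of $\mathbb{S}^n$; the degree/orientation alternative, namely that every map $\mathbb{S}^n\to\mathbb{RP}^n$ factors through $q$ and so is even on $H_n$ when $n$ is odd, while $H_n(\mathbb{RP}^n)=0$ when $n$ is even, is a fallback should the hypotheses of Theorem \ref{4} need rechecking.
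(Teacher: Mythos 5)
Your proof is correct, and in the simply connected case it takes a genuinely different route from the paper. Where you lift $u_X$ through the covering $q\colon\mathbb{S}^n\to\mathbb{RP}^n$ (valid since $X$ has CW type and is simply connected, so the lifting criterion applies, and $q\circ\tilde u_X=u_X$ gives $(d_X\circ q)\circ\tilde u_X\simeq 1_X$) and thereby conclude that $X$ is dominated by $\mathbb{S}^n$ itself, the paper instead runs a degree-by-degree elimination: it splits into $n$ even and $n$ odd, and repeatedly applies the Hurewicz theorem (Theorem \ref{1}) to climb through dimensions, using that $H_i(X)$ is a direct summand of $H_i(\mathbb{RP}^n)$ and that $\pi_i(X)$ embeds in $\pi_i(\mathbb{S}^n)=0$ for $i<n$ to kill the potential $\mathbb{Z}_2$'s in odd degrees, arriving at $X\simeq\{*\}$ for $n$ even and at the alternative $X\simeq\{*\}$ or $X\simeq\mathbb{S}^n$ for $n$ odd. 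Both arguments then dispose of $X\simeq\mathbb{S}^n$ identically, via Theorem \ref{4} with $H^n(\mathbb{S}^n;\mathbb{Z}_2)\neq 0$ forcing the absurd equivalence $\mathbb{S}^n\simeq\mathbb{RP}^n$; and your case $\pi_1(X)\cong\mathbb{Z}_2$ coincides with the paper's (Lemma \ref{K} to exclude $K(\mathbb{Z}_2,1)$, Hopfian-group observation, Whitehead). Your lifting trick buys uniformity and economy: no parity split, no iterated Hurewicz bookkeeping, and a direct reduction to the already-known value $C(\mathbb{S}^n)=2$, which is the structural reason the torsion Moore-space types cannot occur. A further small merit of your write-up is the explicit disposal of $n=1$ via $\mathbb{RP}^1=\mathbb{S}^1$; the paper's proof tacitly assumes $n\geq 2$, since its opening claim $\pi_1(P)\cong\mathbb{Z}_2$ fails there.
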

\begin{proof}
Suppose that $X$ is homotopy dominated by $P=\mathbb{RP}^n$ with the domination map $d_X :P\longrightarrow X$ and $\tilde{X}$  denotes the universal covering space of $X$. Then $\pi_1 (X)$ and $H_{i}(X)$ are isomorphic to a direct suumand of $\pi_1 (P)\cong \mathbb{Z}_2$ and of $H_i (P)$, respectively, for all $i\geq 2$. We have the following cases:

Case One. $\pi_1 (X)\cong \mathbb{Z}_2$. Since $\tilde{X}$ is homotopy dominated by $\tilde{P}=\mathbb{S}^n$ and  $C(\mathbb{S}^n )=2$,  $\tilde{X}$ has the homotopy type of $\{ *\}$ or $\mathbb{S}^n$. If $\tilde{X}$ has the homotopy type of $\{ *\}$,  then  $X$ is the Eilenberg-MacLane space $K(\mathbb{Z}_2 ,1)$. By Lemma \ref{K}, $H_i (K(\mathbb{Z}_2 ,1))$  is nonzero for infinitely many values of $i$. But this is a contradiction with $H_i (X)\cong \leqslant H_i (P)$ for all $i\geq 0$. Hence $\tilde{X}$ has the homotopy type of $\mathbb{S}^n$ and so, $H_i (\tilde{X})\cong H_i (\mathbb{S}^n )$ for $i\geq 0$. Also, the homomorphism $(\tilde{d}_X)_* :H_i (\mathbb{S}^n )\longrightarrow H_i (\tilde{X})$ which is induced by the domination map $\tilde{d}_X$ is an epimorphism between two isomorphic Hopfian groups and so, it is an isomorphism for all $i\geq 2$. Thus  by the Whitehead Theorem, $d_X :P \longrightarrow X$ is a homotopy equivalence  and so, $X$ and $P$ have the same homotopy type.

Case Two. $\pi_1 (X)=1$.  First, let $n$ be even. We know that
\[
H_i (P)=
\begin{cases}
\mathbb{Z}, & i=0\\
\mathbb{Z}_2 , & i \; \text{is odd and} \; 0<i<n\\
0, & \text{otherwise}.
\end{cases}
\]
 Since $X$ is 1-connected, by Theorem \ref{1} $\pi_2 (X)\cong H_2 (X)$. But  $H_2 (X)=0$ and so, $X$ is 2-connected. Now again by Theorem \ref{1}, $\pi_3 (X)\cong H_3 (X)$. If $H_3 (X)\neq 0$, then $H_3 (X)\cong \mathbb{Z}_2$. Hence $\pi_3 (X)\cong \mathbb{Z}_2$. But this is a contradiction since $\pi_3 (X)\cong \leqslant \pi_3 (P)\cong \pi_3 (\mathbb{S}^n)=0$. Therefore, $H_3 (X)=0$ and so, $\pi_3 (X)=0$. Then $X$ is 3-connected and by Theorem \ref{1}, $\pi_4 (X) \cong H_4 (X)$. Therefore, $X$ is 4-connected because   $H_4 (X)\cong \leqslant H_4 (P)=0$. Again, by Theorem \ref{1} $\pi_5 (X)\cong H_5 (X)$. By continuing this process, we have $H_{i}(X)=0$ for all $1\leq i\leq n-1$. On the other hand, since $H_i (X)\cong \leqslant H_i (P)$, we have $H_i (X)=0$ for all $i\geq n$. Thus $X$ is a simply connected CW-complex with trivial homology groups and hence by the Whitehead Theorem, $X$ has the homotopy type of $\{ *\}$.

Second, let $n$ be odd. We know that
\[
H_i (P)=
\begin{cases}
\mathbb{Z}, & i=0,n\\
\mathbb{Z}_2 , & i \; \text{is odd and} \; 0<i<n\\
0, & \text{otherwise}.
\end{cases}
\]
 Similar to the previous argument, we obtain $H_i (X)=0$ for all $i\neq n$.  If $H_n (X)\neq 0$, then $H_n (X)\cong \mathbb{Z}$ and so $X$ has the homotopy type of $\mathbb{S}^n$. But $\mathbb{S}^n$ can not be homotopy dominated by $P$. Because if $\mathbb{S}^n$ is homotopy dominated by $P$, since $H^n (\mathbb{S}^n ;\mathbb{Z}_2)\neq 0$, then by Theorem \ref{4}, $\mathbb{S}^n$ has the homotopy type of $P$ which is a contradiction. Therefore $H_n (X)=0$ and so by the Whitehead Theorem, $X$ has the homotopy type of $\{ *\}$.
\end{proof}
\section{An Upper Bound for the Capacity of a $\mathbb{Z}_n$-Complex}
In this section, we present an upper bound for the capacity of a $\mathbb{Z}_n$-complex, i.e., a 2-dimensional CW-complex with finite cyclic fundamental group $\mathbb{Z}_n$.
\begin{definition}\cite{trees}.
A $\pi$-complex is a  2-dimensional CW-complex with the fundamental group $\pi$. The set $HT(\pi )$ denotes the set of all $\pi$-complexes.
\end{definition}
One can consider $HT(\pi)$  as a graph whose edges connect the type of each $\pi$-complex $X$ to the type of its sum $X \vee \mathbb{S}^2$ with the 2-sphere $\mathbb{S}^2$. These graphs are actually trees; they clearly contain no circuits, and they are connected because any two $\pi$-complexes have the same type once each is
summed with an appropriate number of copies of the 2-sphere $\mathbb{S}^2$ (see \cite{trees}).
\begin{definition}\cite{trees}.
A root is the homotopy type of a 2-dimensional CW-complex that does not admit a factorization involving an $\mathbb{S}^2$ summand.
\end{definition}
\begin{definition}\cite{trees}.
A junction is the homotopy type of a 2-dimensional CW-complex that admits two
or more inequivalent factorizations involving an $\mathbb{S}^2$ summand
\end{definition}
Indeed, the roots generate the rest of the types in the tree $HT(\pi )$ under the operation of
forming sum with $\mathbb{S}^2$ and the junctions determine the shape of the tree (see \cite{trees}).
\begin{lemma}\cite[p. 78]{Bau}.\label{com0}
The tree $HT (\mathbb{Z}_n )$ has exactly one root given by the pseudo projective plane $\mathbb{P}_n =\mathbb{S}^1 \cup_{f} e^2$ which is obtained by attaching a 2-cell $e^2$ to $\mathbb{S}^1$ via the map $f:\mathbb{S}^1 \longrightarrow \mathbb{S}^1$ of degree $n$.
\end{lemma}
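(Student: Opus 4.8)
The plan is to establish the statement in two parts: that $\mathbb{P}_n$ is a root, and that it is the only one. Since $HT(\mathbb{Z}_n)$ is connected with edges joining $X$ to $X\vee\mathbb{S}^2$, it suffices to prove that $\mathbb{P}_n$ admits no $\mathbb{S}^2$-factorization and that every $\mathbb{Z}_n$-complex $X$ is homotopy equivalent to $\mathbb{P}_n\vee\bigvee_{k}\mathbb{S}^2$ for some $k\ge 0$. The second assertion yields uniqueness directly: any root $R$ is in particular of the form $\mathbb{P}_n\vee\bigvee_{k}\mathbb{S}^2$, and being a root forces $k=0$, i.e.\ $R\simeq\mathbb{P}_n$.

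First I would check that $\mathbb{P}_n$ is a root. Its cellular chain complex is $\mathbb{Z}\xrightarrow{\;n\;}\mathbb{Z}\xrightarrow{\;0\;}\mathbb{Z}$, so $H_1(\mathbb{P}_n)\cong\mathbb{Z}_n$ and $H_2(\mathbb{P}_n)=0$. Because $H_2(Y\vee\mathbb{S}^2)\cong H_2(Y)\oplus\mathbb{Z}\neq 0$ for every $Y$, the complex $\mathbb{P}_n$ cannot be written as $Y\vee\mathbb{S}^2$; hence it is a root.

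For the second part I would pass to the cellular chain complex of the universal cover over $\Lambda=\mathbb{Z}[\mathbb{Z}_n]=\mathbb{Z}[t]/(t^{n}-1)$. After collapsing a spanning tree and reducing generators (using that $\mathbb{Z}_n$ is generated by a single element) one may present $X$ with one $0$-cell and one $1$-cell, so that $C_*(\tilde X)$ reads $\Lambda^{m}\xrightarrow{\,\partial_2\,}\Lambda\xrightarrow{\,t-1\,}\Lambda$. Since $\tilde X$ is simply connected, $\operatorname{im}\partial_2=\ker(t-1)=\Lambda N$, where $N=1+t+\cdots+t^{n-1}$, while $\pi_2(X)=H_2(\tilde X)=\ker\partial_2$. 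The corresponding data for $\mathbb{P}_n$ is $\Lambda\xrightarrow{\,N\,}\Lambda\xrightarrow{\,t-1\,}\Lambda$ with $\pi_2(\mathbb{P}_n)=\ker N=I$, the augmentation ideal. Applying Schanuel's lemma to the short exact sequences $0\to\pi_2(X)\to\Lambda^{m}\to\Lambda N\to 0$ and $0\to I\to\Lambda\to\Lambda N\to 0$ yields $\pi_2(X)\oplus\Lambda\cong I\oplus\Lambda^{m}$. Cancelling a free summand over $\Lambda$ then gives $\pi_2(X)\cong I\oplus\Lambda^{m-1}$. Each free $\Lambda$-summand of $\pi_2$ is realized by an $\mathbb{S}^2$ wedge summand at the level of equivariant chain complexes, so $X\simeq X_0\vee\bigvee_{m-1}\mathbb{S}^2$ with $\pi_2(X_0)\cong I$; the minimal complex $X_0$ has the same $\Lambda$-chain complex as $\mathbb{P}_n$ and therefore $X_0\simeq\mathbb{P}_n$, which completes the argument.

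The main obstacle is the cancellation step together with its geometric realization. The isomorphism $\pi_2(X)\oplus\Lambda\cong I\oplus\Lambda^{m}$ only determines $\pi_2(X)$ stably, and passing to $\pi_2(X)\cong I\oplus\Lambda^{m-1}$ uses that stably free $\Lambda$-modules are free and cancellable for $\Lambda=\mathbb{Z}[\mathbb{Z}_n]$; this arithmetic input, special to cyclic groups, is precisely what prevents a junction at the bottom of the tree and is known to fail for more general fundamental groups. Lifting the resulting algebraic splitting of $C_*(\tilde X)$ to an actual homotopy wedge decomposition, and concluding $X_0\simeq\mathbb{P}_n$ from equality of $\Lambda$-chain complexes, relies on the homotopy classification of $2$-complexes by their equivariant chain complexes, which I would invoke from the theory underlying $HT(\pi)$.
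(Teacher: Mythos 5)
The paper contains no proof of this lemma to compare against: it is quoted from Baues \cite{Bau} and ultimately from Dyer--Sieradski \cite{trees}, and your ``second assertion'' (every $\mathbb{Z}_n$-complex is homotopy equivalent to $\mathbb{P}_n\vee\bigvee_k\mathbb{S}^2$) is literally the paper's Lemma~\ref{com2}, likewise quoted without proof. Your outer logic is correct and mirrors how the cited results interlock: the cellular chain complex $\mathbb{Z}\xrightarrow{\;n\;}\mathbb{Z}\xrightarrow{\;0\;}\mathbb{Z}$ gives $H_2(\mathbb{P}_n)=0$, so $\mathbb{P}_n$ admits no $\mathbb{S}^2$-factorization, and the structure theorem then forces uniqueness of the root. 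Your algebra is also sound where it is carried out: $\pi_2(\mathbb{P}_n)=\ker N=I$, $\operatorname{im}\partial_2=\ker(t-1)=\Lambda N$ (this uses $H_1(\tilde X)=0$), and Schanuel's lemma does yield $\pi_2(X)\oplus\Lambda\cong I\oplus\Lambda^{m}$.

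However, your proof of the structure theorem has genuine gaps. First, the reduction to ``one $0$-cell and one $1$-cell'' up to homotopy equivalence is unjustified: collapsing a spanning tree gives one $0$-cell, but eliminating surplus generators requires Tietze moves, and only the Andrews--Curtis-type moves (conjugate, invert, multiply relators; expansions/collapses) are realized by homotopy equivalences of $2$-complexes; the claim that every $\mathbb{Z}_n$-complex has the homotopy type of a one-$1$-cell complex is in effect a consequence of the lemma you are proving. This gap is repairable: apply the generalized Schanuel lemma to $0\to\pi_2(X)\to C_2(\tilde X)\to C_1(\tilde X)\to C_0(\tilde X)\to\mathbb{Z}\to 0$ against the standard periodic resolution of $\mathbb{Z}$ over $\Lambda$, which gives the same stable isomorphism with no normal form needed. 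Second, a smaller point: what your cancellation step needs is not ``stably free $\Rightarrow$ free'' (the module $\pi_2(X)$ is not stably free) but cancellation of a free summand from the lattice $I\oplus\Lambda^{m-1}$; this is true for $\Lambda=\mathbb{Z}[\mathbb{Z}_n]$, e.g.\ by Jacobinski's theorem since $\mathbb{Q}[\mathbb{Z}_n]$ is a product of fields and the Eichler condition holds, so the fact is right but the justification as stated is off target. The serious gap is the realization step: a module isomorphism $\pi_2(X)\cong I\oplus\Lambda^{m-1}$ does not formally split off sphere summands, and ``equal $\Lambda$-chain complexes imply equal homotopy types'' is not a quotable general fact for $2$-complexes --- its failure for other fundamental groups is precisely why trees $HT(\pi)$ can have several roots and junctions (Dunwoody's trefoil example), and for finite $\pi_1$ it is itself a hard theorem. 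The standard repair, and essentially what \cite{trees} does, is to build an explicit map $h:\mathbb{P}_n\vee\bigvee_{m-1}\mathbb{S}^2\to X$ sending $\mathbb{S}^1$ to a loop representing a generator of $\pi_1(X)$ (the $2$-cell extends since the $n$-th power of that loop is nullhomotopic), sending the spheres to a basis of the free part of $\pi_2(X)$, and then verifying that for a suitable choice of generator and extension $h$ induces isomorphisms on $\pi_1$ and on $H_*$ of universal covers, whence Whitehead applies. As written, your argument establishes only the stable, $\pi_2$-level statement, not the homotopy classification itself.
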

\begin{lemma}\cite[Complement, p. 64]{Wall}.\label{com}
If $X$ satisfies $\mathcal{D}_2$, it is equivalent to a 3-dimensional CW-complex.
\end{lemma}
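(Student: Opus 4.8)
The plan is to show that the hypothesis $\mathcal{D}_2$ already forces the stronger condition $\mathcal{D}_3$, and then to invoke Wall's main theorem — the case $n\geq 3$ of the characterization of which complexes are homotopy equivalent to an $n$-dimensional one, the very result that this lemma complements. Thus essentially all the work lies in upgrading $\mathcal{D}_2$ to $\mathcal{D}_3$ at the chain level, after which the geometric conclusion is free.

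First I would take $X$ to be a CW-complex of its homotopy type and pass to the cellular chain complex $C_* = C_*(\tilde{X})$ of the universal cover, regarded as a complex of free $\mathbb{Z}\pi$-modules, where $\pi = \pi_1(X)$. The homology part of $\mathcal{D}_2$ says $H_i(\tilde{X}) = 0$ for $i > 2$; writing $B_2 = \mathrm{im}(\partial_3 : C_3 \to C_2)$, this vanishing makes
\[
\cdots \longrightarrow C_4 \longrightarrow C_3 \longrightarrow B_2 \longrightarrow 0
\]
a free $\mathbb{Z}\pi$-resolution of $B_2$. Computing the cellular cochain complex with coefficients in an arbitrary bundle $\mathcal{B}$ (that is, a $\mathbb{Z}\pi$-module $B$) through this resolution will identify $H^{k}(X;\mathcal{B}) \cong \mathrm{Ext}^{k-3}_{\mathbb{Z}\pi}(B_2, B)$ for every $k \geq 4$.

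The decisive step is to extract projectivity of $B_2$ from the cohomological part of $\mathcal{D}_2$. Using $H_3(\tilde{X}) = 0$ to identify $C_3/\mathrm{im}\,\partial_4 \cong B_2$, a direct computation gives
\[
H^{3}(X;\mathcal{B}) \cong \mathrm{coker}\big(\mathrm{Hom}_{\mathbb{Z}\pi}(C_2, B) \longrightarrow \mathrm{Hom}_{\mathbb{Z}\pi}(B_2, B)\big),
\]
the map being restriction along the inclusion $B_2 \hookrightarrow C_2$. The hypothesis $H^3(X;\mathcal{B}) = 0$ for all $\mathcal{B}$ then says this restriction is onto for every $B$; taking $B = B_2$ and lifting the identity produces a retraction $C_2 \to B_2$, so $B_2$ is a direct summand of the free module $C_2$ and hence projective. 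Consequently $\mathrm{Ext}^{j}_{\mathbb{Z}\pi}(B_2, -) = 0$ for all $j \geq 1$, and the identification above forces $H^{k}(X;\mathcal{B}) = 0$ for all $k \geq 4$ and all $\mathcal{B}$. In particular $H^4(X;\mathcal{B}) = 0$, which together with $H_i(\tilde{X}) = 0$ for $i > 3$ is exactly condition $\mathcal{D}_3$; Wall's theorem in dimension $3$ then delivers a $3$-dimensional CW-complex homotopy equivalent to $X$.

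The main obstacle — and the reason the conclusion is a $3$-complex rather than a $2$-complex — is precisely this projectivity phenomenon. The argument yields $B_2$ projective but gives no control over whether it is free, or even stably free, so one cannot geometrically trade the module data down into a $2$-dimensional skeleton. Attempting the final dimension reduction from $3$ to $2$ runs into the attaching maps of $2$-cells, which are maps out of the non-simply-connected $S^1$ and carry $\pi_1$-information that the Hurewicz-type argument — valid for cells of dimension $\geq 3$ — cannot absorb. This is the homotopy-theoretic shadow of Wall's D(2) problem, and it is exactly why the lemma stops one dimension short of a $2$-complex.
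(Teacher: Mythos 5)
Your argument is correct, and it is essentially the proof of the statement as it appears in the cited source: the paper offers no proof of this lemma, quoting it as the Complement on p.~64 of Wall's \emph{Finiteness conditions for CW-complexes}, and your route --- identifying $H^3(X;\mathcal{B})$ with the cokernel of $\mathrm{Hom}_{\mathbb{Z}\pi}(C_2,B)\to\mathrm{Hom}_{\mathbb{Z}\pi}(B_2,B)$ to extract a retraction making $B_2=\mathrm{im}\,\partial_3$ projective, hence $H^k(X;\mathcal{B})\cong\mathrm{Ext}^{k-3}_{\mathbb{Z}\pi}(B_2,B)=0$ for $k\geq 4$, so that $\mathcal{D}_2$ upgrades to $\mathcal{D}_3$ and Wall's theorem for $n\geq 3$ applies --- is exactly Wall's argument. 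Your closing observation about why the reduction halts at dimension $3$ (projective but not necessarily free $B_2$, i.e., the shadow of the D(2) problem) is also accurate.
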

\begin{lemma}\cite[Corollary 2, p. 412]{Cohen}.\label{com1}
If $X$ is an $(n+1)$-complex dominated by an $n$-complex, then there exists  a wedge
of $k$ copies of $n$-spheres $W$, where $k$ is equal to the number of $(n+1)$-cells in $X$, such that $X \vee W \simeq X^{(n)}$, the $n$-skeleton of $X$.
\end{lemma}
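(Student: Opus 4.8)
The plan is to manufacture an explicit homotopy equivalence $g\colon X\vee W\longrightarrow X^{(n)}$ out of the domination data together with the attaching maps of the top cells, and then to certify it with the Whitehead Theorem. Write $\pi=\pi_1(X)$, let $j\colon X^{(n)}\hookrightarrow X$ be the inclusion of the $n$-skeleton, and let $\phi\colon W=\bigvee_k \mathbb{S}^n\longrightarrow X^{(n)}$ be the map assembling the attaching maps of the $k$ cells of dimension $n+1$, so that $X$ is the mapping cone of $\phi$.

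First I would extract a homotopy section of $j$ from the domination; this is the only place the hypothesis enters and is the crux of the argument. Let $d\colon X\to K$ and $e\colon K\to X$ satisfy $e\circ d\simeq \mathrm{id}_X$ with $\dim K=n$. By cellular approximation $e$ is homotopic to a cellular map, and since $K$ is $n$-dimensional its image lies in $X^{(n)}$; hence $e\simeq j\circ e'$ for some $e'\colon K\to X^{(n)}$. Setting $s=e'\circ d\colon X\to X^{(n)}$ gives $j\circ s\simeq e\circ d\simeq \mathrm{id}_X$, so $s$ is a homotopy section of $j$.

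Next I would record the homological picture on universal covers. Because $X$ is dominated by the $n$-dimensional complex $K$, each $H_i(\widetilde X)$ is a direct summand of the corresponding homology of a covering space of $K$, which vanishes for $i>n$; in particular $H_{n+1}(\widetilde X)=0$, so the cellular boundary $\partial_{n+1}\colon C_{n+1}(\widetilde X)\cong(\mathbb{Z}\pi)^k\to C_n(\widetilde X)$ is injective. Comparing the chain complexes of $\widetilde{X^{(n)}}$ and $\widetilde X$ then yields the short exact sequence of $\mathbb{Z}\pi$-modules
\[
0\longrightarrow \mathrm{im}\,\partial_{n+1}\longrightarrow H_n(\widetilde{X^{(n)}})\xrightarrow{\;j_*\;} H_n(\widetilde X)\longrightarrow 0 ,
\]
which splits since $\mathrm{im}\,\partial_{n+1}\cong(\mathbb{Z}\pi)^k$ is free. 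On the other hand the attaching map induces on $H_n$ of the universal cover precisely the boundary $\partial_{n+1}$, so $\phi_*$ carries $H_n(\widetilde W)\cong(\mathbb{Z}\pi)^k$ isomorphically onto the kernel summand $\mathrm{im}\,\partial_{n+1}=\ker j_*$, while $s_*$ embeds $H_n(\widetilde X)$ as a complementary summand (as $j_*s_*=\mathrm{id}$ forces $H_n(\widetilde{X^{(n)}})=\mathrm{im}\,s_*\oplus\ker j_*$).

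Finally I would set $g=(s,\phi)\colon X\vee W\to X^{(n)}$ and invoke the Whitehead Theorem. For $n\ge 2$ the space $W$ is simply connected, so $\pi_1(X\vee W)=\pi_1(X^{(n)})=\pi$ and $g$ induces $s_*$, hence an isomorphism, on $\pi_1$. Splitting the universal cover of $X\vee W$ as $\widetilde X$ with a copy of $W$ wedged on over each element of $\pi$, the computation above shows $g_*$ is an isomorphism on $H_n$ of the universal covers; in degrees below $n$ it agrees with the inverse of the $n$-connected map $j_*$, and in degrees above $n$ both sides vanish. Thus $g$ induces isomorphisms on $\pi_1$ and on every homology group of the universal covers, so the Whitehead Theorem makes it a homotopy equivalence and $X\vee W\simeq X^{(n)}$. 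I expect the section step, and the verification that $\phi$ realizes the algebraic splitting, to be the main obstacle; the case $n=1$ requires the parallel bookkeeping with free fundamental groups, using that a $2$-complex dominated by a graph has free $\pi_1$.
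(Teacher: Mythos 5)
The paper offers no proof of this lemma to compare against: it is quoted directly from Cohen \cite[Corollary 2, p.~412]{Cohen}, so your attempt must be judged on its own merits. On those merits it is essentially correct, and it follows the natural (Cohen-style) route: cellular approximation converts the domination through an $n$-complex into a homotopy section $s$ of the skeletal inclusion $j\colon X^{(n)}\hookrightarrow X$; dimension of the dominating complex forces $H_{n+1}(\widetilde X)=0$, hence $\partial_{n+1}$ is injective, so the attaching classes span a copy of $(\mathbb{Z}\pi)^k$ equal to $\ker j_*=\mathrm{im}\,\partial_{n+1}$ inside $H_n(\widetilde{X^{(n)}})$, complementary to $\mathrm{im}\,s_*$; and since $j\circ\phi$ is nullhomotopic in $X$, the map $g=(s,\phi)$ has ``triangular'' effect on $H_n$ of universal covers with invertible diagonal blocks, so the $\pi_1$-plus-homology-of-universal-covers form of the Whitehead theorem (the same form this paper invokes from Hilton) applies.

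Two caveats. First, your claim that the short exact sequence ``splits since $\mathrm{im}\,\partial_{n+1}\cong(\mathbb{Z}\pi)^k$ is free'' is not a valid reason: freeness of the \emph{sub}module never splits a short exact sequence (one needs projectivity of the quotient, or an explicit section). Fortunately your parenthetical observation that $j_*s_*=\mathrm{id}$ forces $H_n(\widetilde{X^{(n)}})=\mathrm{im}\,s_*\oplus\ker j_*$ is the correct and sufficient justification, so this is a redundant misstatement rather than a gap. Second, the case $n=1$ is only gestured at, and your main scaffolding genuinely fails there: $\pi_1(X^{(1)})$ is in general strictly larger than $\pi_1(X)$, wedging on circles changes $\pi_1(X\vee W)$, and the universal-cover bookkeeping above does not transfer. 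It can be closed directly: a $2$-complex dominated by a graph has $H_i(\widetilde X)=0$ for $i\geq 2$, so $X$ is aspherical with free fundamental group, hence a wedge of, say, $r$ circles; then $\chi(X)=\chi(X^{(1)})+k$ gives $r+k=\mathrm{rank}\,\pi_1(X^{(1)})$, so $X\vee W\simeq X^{(1)}$. Since the present paper uses the lemma only with $n=2$ (a $3$-complex dominated by a $\mathbb{Z}_n$-complex), the omission is harmless in context, but a self-contained proof of the stated lemma should include it.
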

\begin{lemma}\cite{trees}.\label{com2}
Let $X$ be a $\mathbb{Z}_n$-complex. Then $X$ has the homotopy type of the wedge sum $\mathbb{P}_n \vee \mathbb{S}^2 \vee \cdots \vee \mathbb{S}^2$ of the pseudo projective plane $\mathbb{P}_q$ and  rank $H_2 (X)$ copies of 2-sphere $\mathbb{S}^2$.
\end{lemma}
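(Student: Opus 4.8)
The plan is to read off the result from the tree structure of $HT(\mathbb{Z}_n)$ recalled in the preliminaries, together with the identification of its unique root in Lemma \ref{com0}. Recall that the vertices of $HT(\mathbb{Z}_n)$ are the homotopy types of $\mathbb{Z}_n$-complexes, that each vertex $Y$ is joined by an edge to $Y \vee \mathbb{S}^2$, and that this graph is a connected tree all of whose types are generated from its roots by repeatedly forming the wedge with $\mathbb{S}^2$. A $\mathbb{Z}_n$-complex $X$ is by definition such a vertex, so it suffices to locate $X$ in the tree.

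First I would invoke Lemma \ref{com0}: the tree $HT(\mathbb{Z}_n)$ has exactly one root, the pseudo projective plane $\mathbb{P}_n$. Since the tree is connected and every type descends, by successively splitting off $\mathbb{S}^2$ summands, to a root, uniqueness of the root forces every $\mathbb{Z}_n$-complex $X$ to reduce to $\mathbb{P}_n$. Hence $X$ has the homotopy type of $\mathbb{P}_n \vee \mathbb{S}^2 \vee \cdots \vee \mathbb{S}^2$ for some number $k \geq 0$ of $2$-spheres. One should note in passing that a junction $Y \simeq X_1 \vee \mathbb{S}^2 \simeq X_2 \vee \mathbb{S}^2$ with $X_1 \not\simeq X_2$ cannot occur here: both $X_1$ and $X_2$ descend to the same root $\mathbb{P}_n$ and are then distinguished only by their number of sphere summands, which is fixed by $H_2$; thus the tree is in fact a single ray $\mathbb{P}_n, \ \mathbb{P}_n\vee\mathbb{S}^2, \ \mathbb{P}_n\vee\mathbb{S}^2\vee\mathbb{S}^2,\ldots$, and the normal form above is genuine.

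It remains to pin down $k$, which I would do by computing second homology on the model. The pseudo projective plane $\mathbb{P}_n = \mathbb{S}^1 \cup_f e^2$ has cellular chain complex $0 \to \mathbb{Z} \xrightarrow{\ \times n\ } \mathbb{Z} \xrightarrow{\ 0\ } \mathbb{Z} \to 0$ in degrees $2,1,0$; the degree-$n$ boundary is injective, so $H_2(\mathbb{P}_n)=0$ (and $H_1(\mathbb{P}_n)\cong\mathbb{Z}_n$, consistent with $\pi_1=\mathbb{Z}_n$). Therefore $H_2\bigl(\mathbb{P}_n \vee \mathbb{S}^2 \vee \cdots \vee \mathbb{S}^2\bigr)\cong \mathbb{Z}^k$, so the number of spheres is exactly $\mathrm{rank}\,H_2(X)$, as claimed. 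This rank is a legitimate invariant: for a $2$-complex $C_3=0$, so $H_2(X)=\ker\partial_2$ is a subgroup of a free abelian group and hence free, and its rank is a homotopy invariant matching the count.

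I expect the only real subtlety to be the reduction-to-the-root step: justifying that a single root plus connectedness of $HT(\mathbb{Z}_n)$ really does force the ``$\mathbb{P}_n \vee$ spheres'' normal form with no hidden branching. This is precisely where the uniqueness assertion of Lemma \ref{com0}, together with the cancellation behaviour of $\mathbb{Z}_n$-complexes recorded in \cite{trees}, does the essential work; once the normal form is in hand, the homological bookkeeping that fixes $k$ is routine.
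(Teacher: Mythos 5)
Your proof is correct, and it matches the intended justification: the paper itself offers no argument for this lemma, citing it directly from Dyer and Sieradski \cite{trees}, and your derivation uses exactly the machinery the paper sets up for that purpose. Specifically, descending in the tree $HT(\mathbb{Z}_n)$ by splitting off $\mathbb{S}^2$-summands terminates because each step lowers $\mathrm{rank}\,H_2$ (free and finitely generated for a finite $2$-complex) by one, the uniqueness of the root $\mathbb{P}_n$ from Lemma \ref{com0} identifies the endpoint, and the computation $H_2(\mathbb{P}_n)=0$ from the cellular chain complex $\mathbb{Z}\xrightarrow{\times n}\mathbb{Z}\xrightarrow{0}\mathbb{Z}$ pins the number of sphere summands at $\mathrm{rank}\,H_2(X)$; your side remark that the tree is a single ray is a correct (if unneeded) bonus, consistent with the cancellation property recorded in Lemma \ref{com3}.
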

\begin{lemma}\cite{trees}.\label{com3}
The following are equivalent statements for a finitely presented group $\pi$.
\begin{enumerate}
\item
The tree $HT(\pi )$ of homotopy types of $\pi$-complexes has a single root.
\item
For $\pi$-complexes, there is a cancellation law for $\mathbb{S}^2$-summands, i.e.,
\[
X \vee \mathbb{S}^2 \simeq Y \vee \mathbb{S}^2 \quad \text{implies} \quad X\simeq Y.
\]
\end{enumerate}
\end{lemma}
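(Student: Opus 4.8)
The plan is to translate both statements into the combinatorial structure of the tree $HT(\pi)$ and then reduce the equivalence to an elementary fact about graded trees. First I would record the structural features I intend to use: $HT(\pi)$ is connected and acyclic (both already noted in the text); every vertex, i.e. every homotopy type of a $\pi$-complex $X$, has exactly one \emph{successor} $X\vee\mathbb{S}^2$; and the assignment $\ell(X)=\operatorname{rank} H_2(X)$ is a grading, since $H_2$ of a finite $2$-complex is free abelian and $\ell(X\vee\mathbb{S}^2)=\ell(X)+1$. In particular $\ell$ takes values in $\mathbb{Z}_{\geq 0}$, so it is bounded below, and each edge joins a vertex of level $k$ to one of level $k+1$. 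In this language a root is precisely a vertex with no predecessor (one not of the form $Y\vee\mathbb{S}^2$), hence a vertex of degree $1$, its only incident edge being the successor edge; a junction is a vertex with two or more predecessors, hence of degree at least $3$.

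Next I would dispose of the easy equivalence, namely that (2) holds if and only if $HT(\pi)$ has no junction. If cancellation fails there are $\pi$-complexes $X\not\simeq Y$ with $X\vee\mathbb{S}^2\simeq Y\vee\mathbb{S}^2$; then $Z=X\vee\mathbb{S}^2$ admits the two inequivalent factorizations through $X$ and through $Y$, so $Z$ is a junction. Conversely a junction supplies exactly such a pair $X\not\simeq Y$, so cancellation fails. It therefore remains to prove that $HT(\pi)$ has a single root if and only if it has no junction.

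For the implication ``no junction $\Rightarrow$ single root'', note that if no vertex is a junction then every vertex has at most one predecessor and exactly one successor, hence degree at most $2$. A connected acyclic graph all of whose degrees are at most $2$ is a finite path, a one-sided infinite ray, or a two-sided infinite line. The successor chain $X,\,X\vee\mathbb{S}^2,\,X\vee\mathbb{S}^2\vee\mathbb{S}^2,\dots$ has strictly increasing levels, so it never repeats and $HT(\pi)$ is infinite, ruling out a finite path; a two-sided line would contain an infinite chain of predecessors of strictly decreasing level, contradicting $\ell\geq 0$. Hence $HT(\pi)$ is a ray, whose unique endpoint has degree $1$ and is therefore its unique root.

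For the converse I would argue by contraposition, showing that a junction forces at least two roots. Let $J$ be a junction with two distinct predecessors $A$ and $B$. Starting from $A$ and repeatedly passing to a predecessor strictly decreases $\ell$, so, since $\ell$ is bounded below, this ascending process terminates at a root $R_A$; likewise one reaches a root $R_B$ from $B$. If $R_A=R_B$, then concatenating the ascending path to this common root with the two distinct edges joining $J$ to $A$ and to $B$ produces two different simple paths from the root to $J$ (their final edges differ and $J$ lies strictly below every vertex of the ascending paths), contradicting the uniqueness of paths in a tree. Hence $R_A\neq R_B$ and $HT(\pi)$ has at least two roots. Combining the two implications gives single root $\Leftrightarrow$ no junction, which together with the reduction above establishes (1) $\Leftrightarrow$ (2). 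The step requiring the most care — the crux of the argument — is this interplay between the tree axioms and the grading: one must invoke acyclicity (uniqueness of paths) together with the boundedness below of $\ell$ to guarantee both that ascending chains terminate at genuine roots and that the distinct predecessors of a junction sit beneath distinct roots.
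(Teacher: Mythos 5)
Your proof is correct, but there is nothing in the paper to compare it against: the paper states this lemma as an imported result, cited from Dyer and Sieradski \cite{trees}, and gives no proof of it — the surrounding text only records the structural facts you take as input (that $HT(\pi)$ is connected and acyclic, with edges joining $X$ to $X\vee\mathbb{S}^2$). Your argument is thus a self-contained reconstruction, and it holds up under scrutiny. The grading $\ell(X)=\operatorname{rank} H_2(X)$ is legitimate, since $H_2$ of a finite $2$-complex is finitely generated free abelian and $\ell(X\vee\mathbb{S}^2)=\ell(X)+1$; your dictionary (root $=$ vertex with no predecessor, junction $=$ vertex with at least two predecessors) matches the paper's definitions exactly, so the equivalence of (2) with ``no junction'' is indeed immediate; and both halves of ``single root $\Leftrightarrow$ no junction'' are carried out correctly. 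In the direction ``no junction $\Rightarrow$ single root'' you correctly use that every vertex has a \emph{unique} successor, which is what forces the second neighbor of each vertex on a degree-$\leq 2$ graph to be a predecessor and hence makes $\ell$ strictly monotone along a would-be two-sided line, killing it by boundedness below; in the converse direction, the comparison of levels does guarantee that the two paths from the junction $J$ to a putative common root are simple and distinct (they differ in their first edge), so uniqueness of simple paths in a tree applies. Two minor remarks: your phrase ``$J$ lies strictly below every vertex of the ascending paths'' reads oddly since $\ell(J)$ is strictly \emph{larger} than the levels on those chains — the only fact used is $J\notin$ chains, which your level comparison gives either way; and you could make the argument even more self-contained by noting that acyclicity itself follows from your grading (a vertex of minimal level on a putative circuit would need two distinct successors, contradicting uniqueness of $X\vee\mathbb{S}^2$), rather than quoting it from the text. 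This is in the same combinatorial spirit as the original Dyer--Sieradski treatment, where the equivalence is essentially built into their tree formalism.
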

We recall the following old problem concerning CW complexes.
\begin{center}
\textit{If $X$ is a CW-complex homotopy dominated by an $n$-dimensional CW-complex, then is $X$ homotopy equivalent to an $n$-dimensional CW-complex?}
\end{center}
C.T.C. Wall in \cite{Wall} showed that the answer is yes if $n>2$. The Stallings-Swan Theorem \cite{Swan} answers the problem affirmatively for $n = 1$. But the answer for the case $n=2$ is still unknown. J.M. Cohen in \cite{Cohen} showd that if $X$ is dominated by a 2-dimensional complex, then there is a wedge of 2-spheres $W$ such that $X\vee W$ is of the homotopy type of a 2-dimensional complex (for more detalis, see \cite{Cohen}). In the next theorem, we give a positive answer to the above question for $\mathbb{Z}_n$-complexes. In addition, we determine a space which is homotopy dominated by a $\mathbb{Z}_n$-complex up to homotopy equivalent and  using this result, we present an upper bound for the capacity of a $\mathbb{Z}_n$-complex.
\begin{theorem}\label{Main}
Let $n=p_{1}^{\alpha_1}p_{2}^{\alpha_2}\cdots p_{m}^{\alpha_m}$ where $p_i$'s are mutually distinct primes  and   $\alpha_i$'s are positive integers. Then every space homotopy dominated by a $\mathbb{Z}_n$-complex has the homotopy type of a $\mathbb{Z}_{m}$-complex where $m=p_{i_1}^{\alpha_{i_1}}\cdots p_{i_j}^{\alpha_{i_j}}$ for $i_1 ,\cdots ,i_j \in \{ 1,\cdots ,m\}$.
\end{theorem}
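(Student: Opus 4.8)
The plan is to take a space $X$ homotopy dominated by a $\Z_n$-complex $P$, with domination maps $d:P\lo X$ and $u:X\lo P$ satisfying $d\circ u\simeq\mathrm{id}_X$, and to proceed in three stages: first pin down $\pi_1(X)$, then reduce $X$ to dimension $2$, and finally identify its homotopy type as a wedge built from a pseudo projective plane. Since $d_*\circ u_*=\mathrm{id}_{\pi_1(X)}$, the group $\pi_1(X)$ is a retract, hence (being abelian) a direct summand, of $\pi_1(P)\cong\Z_n$. Writing $\Z_n\cong\Z_{p_1^{\alpha_1}}\oplus\cdots\oplus\Z_{p_m^{\alpha_m}}$ and using that each $\Z_{p_i^{\alpha_i}}$ is indecomposable while distinct primary components are mutually coprime, the direct summands of $\Z_n$ are exactly the subgroups $\bigoplus_{i\in S}\Z_{p_i^{\alpha_i}}\cong\Z_{m}$ with $m=\prod_{i\in S}p_i^{\alpha_i}$ for some $S\subseteq\{1,\dots,m\}$. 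This already produces the cyclic group $\Z_m$ of the ``unitary divisor'' form demanded in the conclusion.

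Second I would reduce the dimension of $X$. Domination makes $H_*(X;\mathcal B)$ a retract of $H_*(P;u^*\mathcal B)$ for every local coefficient system $\mathcal B$; since $P$ is $2$-dimensional this forces $H_i(\tilde X)=0$ for $i>2$ and $H^3(X;\mathcal B)=0$, i.e. $X$ satisfies condition $\mathcal D_2$. By Lemma \ref{com} (Wall) the space $X$ is then homotopy equivalent to a $3$-dimensional complex, and by Lemma \ref{com1} (Cohen), applied with $n=2$, there is a wedge $W=\bigvee_{k}\mathbb S^2$ (one $2$-sphere per $3$-cell of $X$) with $X\vee W\simeq X^{(2)}$, the $2$-skeleton of $X$. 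Now $X^{(2)}$ is a $2$-complex with $\pi_1(X^{(2)})\cong\pi_1(X)\cong\Z_m$, hence a $\Z_m$-complex, so Lemma \ref{com2} gives $X^{(2)}\simeq\mathbb P_m\vee\bigvee_{r}\mathbb S^2$ with $r=\mathrm{rank}\,H_2(X^{(2)})$.

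Combining these, $X\vee\bigvee_{k}\mathbb S^2\simeq\mathbb P_m\vee\bigvee_{r}\mathbb S^2$, and computing $H_2$ (using $H_2(\mathbb P_m)=0$) gives $H_2(X)\cong\Z^{r-k}$, so in particular $r\ge k$. Finally I would cancel the spheres: since $HT(\Z_m)$ has a single root by Lemma \ref{com0}, the cancellation law for $\mathbb S^2$-summands (Lemma \ref{com3}) holds among $\Z_m$-complexes, and removing the common $\bigvee_k\mathbb S^2$ from $X\vee\bigvee_k\mathbb S^2\simeq(\mathbb P_m\vee\bigvee_{r-k}\mathbb S^2)\vee\bigvee_k\mathbb S^2$ would yield $X\simeq\mathbb P_m\vee\bigvee_{r-k}\mathbb S^2$, a $\Z_m$-complex of the required form.

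The step I expect to be the main obstacle is the legitimacy of this last cancellation: Lemma \ref{com3} is phrased for genuine $2$-dimensional $\pi$-complexes, whereas $X$ is a priori only a $3$-complex, and passing from ``$X\vee W$ is homotopy equivalent to a $2$-complex'' to ``$X$ is homotopy equivalent to a $2$-complex'' is precisely the delicate content of the (generally open) $D(2)$-problem. The crux is therefore to exploit the single-root property of $HT(\Z_m)$, equivalently the $\mathbb S^2$-cancellation property, to force $X$ itself down to dimension $2$; concretely I would argue that Cohen's equivalence $X\vee W\simeq X^{(2)}$ together with single-rootedness shows the $k$ attached $3$-cells are homotopically redundant, so that $X$ is already a $\Z_m$-complex to which the cancellation lemma legitimately applies.
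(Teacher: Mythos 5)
Your proposal reproduces the paper's proof essentially step for step: the direct-summand analysis of $\pi_1(X)$ inside $\pi_1(P)\cong\mathbb{Z}_n$, Wall's Lemma \ref{com} to replace $X$ by a $3$-complex, Cohen's Lemma \ref{com1} to get $X\vee W\simeq X^{(2)}$, Lemma \ref{com2} to identify $X^{(2)}\simeq\mathbb{P}_m\vee\bigvee_r\mathbb{S}^2$, the rank count $\mathrm{rank}\,H_2(X)=\mathrm{rank}\,H_2(X^{(2)})-k$ (using that $H_2(X)$ is free abelian as a subgroup of $H_2(P)$), and finally $\mathbb{S}^2$-cancellation via Lemmas \ref{com0} and \ref{com3}. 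The one obstacle you flag --- that Lemma \ref{com3} is stated only for genuine two-dimensional $\pi$-complexes while $X$ is a priori only a $3$-complex, so the cancellation step brushes against the $D(2)$-type difficulty --- is a real subtlety, but the paper's own proof makes exactly the same silent leap, invoking Lemmas \ref{com0} and \ref{com3} on $X$ directly with no further justification; your write-up is thus, if anything, more scrupulous than the paper at its single delicate point.
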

\begin{proof}
Suppose $P$ is a $\mathbb{Z}_n$-complex and  $X$ is homotopy dominated by $P$. By Lemma \ref{com}, we can suppose that $X$ is a 3-dimensional complex. It is easy to see that  $\pi_1 (X)$ is isomorphic to a direct summand of $\pi_1 (P)$. So we can suppose that $\pi_1 (X)=\mathbb{Z}_{m}$ where $m=p_{i_1}^{\alpha_{i_1}}\cdots p_{i_j}^{\alpha_{i_j}}$ for $i_1 ,\cdots ,i_j \in \{ 1,\cdots ,m\}$. By Lemma \ref{com1}, there exists a wedge of $k$ copies of 2-spheres $W$  such that $X \vee W \simeq X^{(2)}$. Since $\pi_1 (X^{(2)})= \pi_1 (X)= \mathbb{Z}_m$, so by Lemma \ref{com2} we have
\[
X\vee W \simeq X^{(2)}\simeq \mathbb{P}_m \vee \mathbb{S}^2 \vee \cdots \vee \mathbb{S}^2
\]
of rank $H_2 (X^{(2)})$ copies of the 2-sphere $\mathbb{S}^2$.

By the hypothesis, $X$ is homotopy dominated by $P$ which is a finite 2-dimensional polyhedron. Therefore, $H_2 (X)$ is isomorphic to a subgroup of the free abelian group $H_2 (P)$. Then $H_2 (X)$ is also a free abelian group of finite rank. On the other hand,  since $X \vee W \simeq X^{(2)}$, $H_2 (X)\oplus H_2 (W)\cong H_2 (X^{(2)})$.   Hence, we obtain
\[
rank\; H_2 (X)=rank\; H_2 (X^{(2)})-k.
\]
 Now by Lemmas \ref{com0} and \ref{com3}, $X\simeq \mathbb{P}_m \vee \mathbb{S}^2 \vee \cdots \vee \mathbb{S}^2$ of rank $H_2 (X)$ copies of the 2-sphere $\mathbb{S}^2$.
\end{proof}
\begin{corollary}
Let  $P$ be a $\mathbb{Z}_n$-complex where $n=p_{1}^{\alpha_1}p_{2}^{\alpha_2}\cdots p_{m}^{\alpha_m}$ for mutually distinct primes $p_i$  and positive integers $\alpha_i$.  Then $C(P)\leq 2^m \times (rank\; H_2 (P) +1)$.
\end{corollary}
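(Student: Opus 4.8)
The plan is to read the bound off directly from Theorem \ref{Main} by counting the homotopy types that can occur among spaces homotopy dominated by $P$. Suppose $X$ is homotopy dominated by $P$. By Theorem \ref{Main}, $X$ has the homotopy type of a $\mathbb{Z}_d$-complex, where (to avoid clashing with the count $m$ of distinct primes in the present statement, I write $d$ for the divisor that Theorem \ref{Main} denotes by $m$) the integer $d=p_{i_1}^{\alpha_{i_1}}\cdots p_{i_j}^{\alpha_{i_j}}$ runs over the products of subsets of the full collection of prime-power factors $\{ p_1^{\alpha_1},\ldots,p_m^{\alpha_m}\}$. Moreover, by Lemma \ref{com2} such a $\mathbb{Z}_d$-complex is homotopy equivalent to $\mathbb{P}_d \vee \mathbb{S}^2 \vee \cdots \vee \mathbb{S}^2$ with $rank\; H_2 (X)$ copies of the $2$-sphere. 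Hence the homotopy type of $X$ is completely determined by the pair $(d,\, rank\; H_2 (X))$, and it suffices to bound the number of admissible pairs.

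First I would count the admissible values of $d$. Each such $d$ corresponds to a choice of subset of the $m$ prime-power factors $\{ p_1^{\alpha_1},\ldots,p_m^{\alpha_m}\}$, the empty subset giving $d=1$ (the simply connected case, where $\mathbb{P}_1 \simeq \{ *\}$). Therefore there are exactly $2^m$ possible values of $d$. Next I would bound $rank\; H_2 (X)$. As established in the proof of Theorem \ref{Main}, $H_2 (X)$ embeds as a subgroup of the finitely generated free abelian group $H_2 (P)$, so $0 \leq rank\; H_2 (X) \leq rank\; H_2 (P)$, which leaves $rank\; H_2 (P)+1$ possible values for the number of $\mathbb{S}^2$-summands.

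Combining the two counts, every space homotopy dominated by $P$ falls into one of at most $2^m \times (rank\; H_2 (P)+1)$ homotopy types, and since $C(P)$ is by definition the number of homotopy types dominated by $P$, this yields $C(P)\leq 2^m \times (rank\; H_2 (P)+1)$. Because the statement asks only for an upper bound, I expect no serious obstacle here: there is no need to check that distinct pairs $(d,\, rank\; H_2 (X))$ give distinct homotopy types, nor that every such pair is actually realized. The only point requiring a little care is the clean identification of the admissible divisors $d$ with the $2^m$ subsets of $\{ p_1^{\alpha_1},\ldots,p_m^{\alpha_m}\}$, together with the observation that the rank bound is legitimate precisely because $H_2 (P)$ is free abelian of finite rank.
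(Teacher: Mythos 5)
Your proposal is correct and takes essentially the same route as the paper: the paper's own (very terse) proof likewise reads the bound off Theorem \ref{Main} by noting that every dominated space is of the form $\mathbb{P}_d \vee \mathbb{S}^2 \vee \cdots \vee \mathbb{S}^2$ with $d$ ranging over the $2^m$ products of subsets of the prime-power factors and with $k$ spheres for $0\leq k\leq rank\; H_2 (P)$. Your write-up merely makes explicit the counting of pairs $(d,k)$ and the rank bound via the embedding $H_2 (X)\leqslant H_2 (P)$, both of which are implicit in the paper's argument.
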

\begin{proof}
By Therem \ref{Main}, every space homotopy dominated by  $P$ has the homotopy type of   $\mathbb{P}_m \vee \mathbb{S}^2 \vee \cdots \vee \mathbb{S}^2$, where $m=p_{i_1}^{\alpha_{i_1}}\cdots p_{i_j}^{\alpha_{i_j}}$ for $i_1 ,\cdots ,i_j \in \{ 1,\cdots ,m\}$,  with $k$ copies of the 2-sphere $\mathbb{S}^2$  for every $0\leq k\leq rank\; H_2 (P)$. Thus the proof is complete.
\end{proof}
\textbf{References}

\end{document}